\newtheorem{theorem}{Theorem}
\newtheorem{lemma}[theorem]{Lemma}
\newtheorem{proposition}[theorem]{Proposition}
\numberwithin{equation}{section}
\begin{document}

\title[Boundedness of the twisted paraproduct]{Boundedness of the twisted paraproduct}
\author{Vjekoslav Kova\v{c}}
\address{Vjekoslav Kova\v{c}, Department of Mathematics, UCLA, Los Angeles, CA 90095-1555, vjekovac@math.ucla.edu}

\begin{abstract}
We prove $\mathrm{L}^p$ estimates for a two-dimensional bilinear operator of paraproduct type.
This result answers a question posed by Demeter and Thiele in \cite{DT}.
\end{abstract}

\subjclass[2000]{Primary 42B15; Secondary 42B20}
\maketitle

\section{Introduction and overview of results}
\label{vksectionintro}

Let us denote dyadic martingale averages and differences by
$$ \mathbb{E}_{k}f := \sum_{|I|=2^{-k}} \!\big({\textstyle\frac{1}{|I|}\int_{I}f}\big)\,\mathbf{1}_{I}\,,
\qquad \Delta_{k}f := \mathbb{E}_{k+1}f - \mathbb{E}_{k}f \,, $$
for every $k\in\mathbb{Z}$, where the sum is taken over dyadic intervals $I\subseteq\mathbb{R}$ of length $2^{-k}$.
When we apply an operator in only one variable of a two-dimensional function,
we mark it with that variable in the superscript. For instance,
$$ (\mathbb{E}_{k}^{(1)}F)(x,y) := \big(\mathbb{E}_{k}F(\cdot,y)\big)(x) \,. $$
The \emph{dyadic twisted paraproduct} is defined as
\begin{equation}
\label{vkeqparaproductdyadic}
T_{\mathrm{d}}(F,G) := \sum_{k\in\mathbb{Z}}\, (\mathbb{E}_{k}^{(1)}F) (\Delta_{k}^{(2)}G) \,.
\end{equation}
In the continuous case, let $\mathrm{P}_{\varphi}$ denote the Fourier multiplier with symbol $\hat{\varphi}$, i.e.
$$ \mathrm{P}_{\varphi}f := f \ast \varphi \,. $$
Take two functions $\varphi,\psi\in\mathrm{C}^1(\mathbb{R})$ satisfying\footnote{For two nonnegative quantities
$A$ and $B$, we write $A\lesssim B$ if there exists an absolute constant $C\geq 0$ such that $A\leq C B$,
and we write $A\lesssim_P B$ if $A\leq C_P B$ holds for some constant $C_P\geq 0$ depending on a parameter $P$.
Finally, we write $A\sim_P B$ if both $A\lesssim_P B$ and $B\lesssim_P A$.}
\begin{equation}
\label{vkeqsymbolbounds}
|\partial^{j} \varphi(x)| \lesssim (1+|x|)^{-3},\quad
|\partial^{j} \psi(x)| \lesssim (1+|x|)^{-3},\quad
\textrm{for }j=0,1 \,,
\end{equation}
and
$$ \mathrm{supp}(\hat{\psi}) \subseteq \{\xi\in\mathbb{R} \,:\, {\textstyle\frac{1}{2}}\!\leq\! |\xi|\leq 2\}\,. $$
For every $k\in\mathbb{Z}$ denote
\,$\varphi_k(t) := 2^k \varphi(2^k t)$\, and \,$\psi_k(t):= 2^k \psi(2^k t)$.
The associated \emph{continuous twisted paraproduct} is defined as
\begin{equation}
\label{vkeqparaproductreal}
T_{\mathrm{c}}(F,G) := \sum_{k\in\mathbb{Z}}\, (\mathrm{P}_{\varphi_k}^{(1)}F) (\mathrm{P}_{\psi_k}^{(2)}G) \,.
\end{equation}
We are interested in strong-type estimates
\begin{equation}
\label{vkeqstrongtype}
\| T(F,G) \|_{\mathrm{L}^{pq/(p+q)}(\mathbb{R}^2)} \,\lesssim_{p,q}
\|F\|_{\mathrm{L}^p(\mathbb{R}^2)} \|G\|_{\mathrm{L}^q(\mathbb{R}^2)} \,,
\end{equation}
and weak-type estimates
\begin{equation}
\label{vkeqweaktype}
\alpha\ \big| \big\{ (x,y)\in\mathbb{R}^2 :\, |T(F,G)(x,y)|>\alpha \big\} \big|^{(p+q)/pq}
\,\lesssim_{p,q} \|F\|_{\mathrm{L}^p(\mathbb{R}^2)} \|G\|_{\mathrm{L}^q(\mathbb{R}^2)}
\end{equation}
for (\ref{vkeqparaproductdyadic}) and (\ref{vkeqparaproductreal}).
The exponent $\frac{pq}{p+q}$ is mandated by scaling invariance.
When $p=\infty$ or $q=\infty$, we interpret it as $q$ or $p$ respectively.

\smallskip
The main result of the paper establishes (\ref{vkeqstrongtype}) and (\ref{vkeqweaktype})
in certain ranges of $(p,q)$.
\begin{theorem}
\label{vktheoremmainparaprod}
\begin{itemize}
\item[(a)]
Operators $T_\mathrm{d}$ and $T_\mathrm{c}$
satisfy the strong bound \emph{(\ref{vkeqstrongtype})} if
$$ {\textstyle 1<p,q<\infty,\ \ \frac{1}{p}+\frac{1}{q}>\frac{1}{2}}\,. $$
\item[(b)]
Additionally, operators $T_\mathrm{d}$ and $T_\mathrm{c}$
satisfy the weak bound \emph{(\ref{vkeqweaktype})} when
$$ p=1,\ 1\leq q<\infty\ \,\textrm{ or }\,\ q=1,\ 1\leq p<\infty \,. $$
\item[(c)]
The weak estimate \emph{(\ref{vkeqweaktype})} fails for \,$p=\infty$\, or \,$q=\infty$\,.
\end{itemize}
\end{theorem}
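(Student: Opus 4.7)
The plan is to reduce the whole theorem to a single ``diagonal'' trilinear estimate for the dyadic operator $T_\mathrm{d}$, transfer it to the continuous case $T_\mathrm{c}$, interpolate to obtain the stated range, and finally exhibit sharpness at $\infty$. The cornerstone is the symmetric bound
$$\Bigl|\sum_{k\in\mathbb{Z}}\iint(\mathbb{E}_{k}^{(1)}F)(\Delta_{k}^{(2)}G)\,\overline{H}\,dx\,dy\Bigr|\,\lesssim\,\|F\|_{\mathrm{L}^2}\|G\|_{\mathrm{L}^2}\|H\|_{\mathrm{L}^2},$$
which after duality yields $T_\mathrm{d}\colon \mathrm{L}^2\times \mathrm{L}^2\to \mathrm{L}^1$. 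To prove it I would expand $\Delta_{k}^{(2)}G$ in the Haar basis in the $y$-variable, so that the left-hand side becomes a discrete sum indexed by dyadic rectangles $R=I\times J$ with $|I|=|J|=2^{-k}$. The ``twisted'' structure---averaging in $x$ paired with a difference in $y$---blocks a direct Cauchy--Schwarz; the key will be a Bellman-function-style telescoping identity that converts the \emph{missing} difference in $x$ into a discrete integration by parts. Two subsequent Cauchy--Schwarz applications should expose three dyadic square functions, one for each of $F$, $G$, $H$, and these are controlled in $\mathrm{L}^2$ by Littlewood--Paley theory.

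To pass from $T_\mathrm{d}$ to $T_\mathrm{c}$, I would decompose each $\varphi_k$ and $\psi_k$ into a rapidly convergent superposition of bumps adapted to shifted dyadic grids and then average the dyadic estimate over the shift parameter: the decay hypothesis \eqref{vkeqsymbolbounds} is tailored precisely so that this averaging converges absolutely, while the frequency-support condition on $\widehat{\psi}$ ensures that only scales of comparable size interact. For part (a) I would then interpolate bilinearly between the $\mathrm{L}^2\times\mathrm{L}^2\to\mathrm{L}^1$ diagonal bound and the ``trivial'' estimates $\|T(F,G)\|_{\mathrm{L}^q}\lesssim\|F\|_{\mathrm{L}^\infty}\|G\|_{\mathrm{L}^q}$ for $1<q<\infty$ together with their symmetric partners; the latter reduce, via the Littlewood--Paley square function in $y$ (resp.\ $x$), to the uniform boundedness of the averages $\mathbb{E}_{k}^{(1)}F$ (resp.\ $\mathrm{P}_{\varphi_k}^{(1)}F$). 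For part (b), a standard Calder\'on--Zygmund decomposition of the $\mathrm{L}^1$ entry upgrades the open-region strong bounds into weak-type bounds at the endpoints.

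Part (c) I would settle by explicit counterexamples: a natural route is to specialize one of the inputs so that the bilinear form mimics---up to lower-order terms---a Hilbert-transform-type operator acting on the other input, for which the failure of weak-type estimates at $\mathrm{L}^\infty$ is classical; a symmetric construction handles $q=\infty$. The unambiguous obstacle is the first stage: the asymmetry $\mathbb{E}_{k}^{(1)}\otimes\Delta_{k}^{(2)}$ is the genuinely two-dimensional feature that distinguishes $T$ from a classical paraproduct, and the heart of the work is to locate a telescoping/Bellman identity that produces the ``missing'' third square function without losing even a logarithmic factor in $k$.
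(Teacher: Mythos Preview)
Your plan has two genuine gaps.

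First, the ``trivial'' endpoint $\|T(F,G)\|_{\mathrm{L}^q}\lesssim\|F\|_{\mathrm{L}^\infty}\|G\|_{\mathrm{L}^q}$ that you want to interpolate against is \emph{false}: it is exactly the estimate that part~(c) of the theorem declares to fail, even in weak form. Your heuristic (``reduce, via the Littlewood--Paley square function in $y$, to the uniform boundedness of $\mathbb{E}_k^{(1)}F$'') breaks because $\sup_k|\mathbb{E}_k^{(1)}F|\le\|F\|_{\mathrm{L}^\infty}$ does not let you sum $\sum_k(\mathbb{E}_k^{(1)}F)(\Delta_k^{(2)}G)$; the signs of the coefficients $\mathbb{E}_k^{(1)}F(x,y)$ vary with $k$ \emph{and} with $y$, so no square-function bound on $G$ alone closes the argument. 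The paper's Rademacher-type counterexample in Section~\ref{vksectioncounterex} makes this explicit. Since the $\mathrm{L}^\infty$ vertices are unavailable, a single diagonal point cannot be interpolated out to the full region in part~(a).

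Second, your ``cornerstone'' inequality $|\Lambda_\mathrm{d}(F,G,H)|\lesssim\|F\|_{\mathrm{L}^2}\|G\|_{\mathrm{L}^2}\|H\|_{\mathrm{L}^2}$ is off the scaling line $\tfrac1p+\tfrac1q+\tfrac1r=1$ and therefore cannot hold; presumably you meant $\|H\|_{\mathrm{L}^\infty}$, which would indeed dualize to $T_\mathrm{d}\colon\mathrm{L}^2\times\mathrm{L}^2\to\mathrm{L}^1$. But even with this correction, one isolated point is not enough (see the first gap).

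What the paper actually does is close to your intuition about a telescoping identity, but it is organized so as to produce an \emph{open region} of trilinear estimates directly: the form is localized to finite convex trees of dyadic squares, a telescoping identity expresses $\Theta^{(1)}_\mathcal{T}+\Theta^{(2)}_\mathcal{T}$ as a boundary term measured by the Gowers box norm $\|\cdot\|_{\Box(Q)}$ on the leaves, and repeated Cauchy--Schwarz (your ``two subsequent applications'' become four) yields a single-tree estimate controlled by $\max_{Q\in\mathcal{L}(\mathcal{T})}\|F_j\|_{\Box(Q)}$. Summing over trees via a stopping-time argument and the bound $\|F\|_{\Box(Q)}\le(\tfrac1{|Q|}\int_Q|F|^2)^{1/2}$ gives \eqref{vkeqcasepqr} for all $2<p,q,r<\infty$. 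The extension to $p\le2$ or $q\le2$ then uses Bernicot's fiber-wise Calder\'on--Zygmund decomposition, not interpolation against an $\mathrm{L}^\infty$ vertex. The transfer to $T_\mathrm{c}$ is done via the Jones--Seeger--Wright square function comparing $\mathrm{P}_{\varphi_k}$ with $\mathbb{E}_k$, rather than by averaging over shifted grids.
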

\begin{figure}[htbp]
\includegraphics[width=0.6\textwidth]{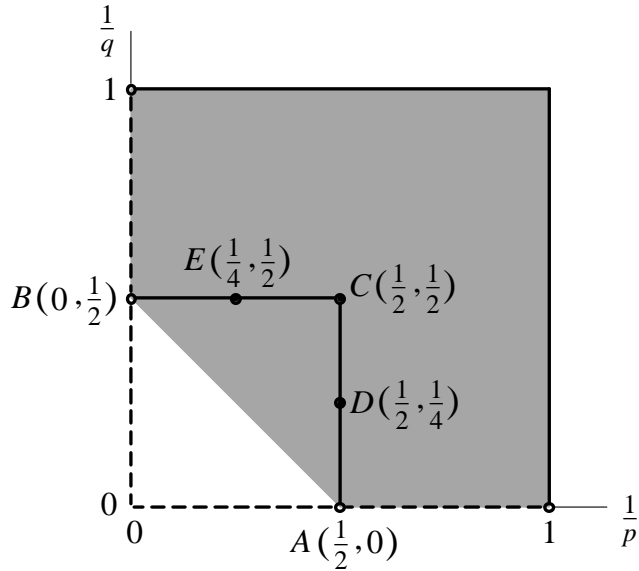}
\caption{The range of exponents we discuss in this paper.}
\label{vkimageexp}
\end{figure}
The name \emph{twisted paraproduct} was suggested by Camil Muscalu
because there is a ``twist'' in the variables in which
the convolutions (or the martingale projections) are performed,
as opposed to the case of the ordinary paraproduct.
No bounds on (\ref{vkeqparaproductdyadic}) or (\ref{vkeqparaproductreal}) were known prior to this work.
A conditional result was shown by Bernicot in \cite{B}, assuming boundedness in some range,
and expanding the range towards lower exponents using a fiber-wise Calder\'{o}n-Zygmund decomposition.
We repeat his argument in the dyadic setting in Section \ref{vksectionextrange},
for the purpose of extending the boundedness region established in Sections \ref{vksectiontelescoping}
and \ref{vksectionsummingtrees}.

Figure \ref{vkimageexp} depicts the range of exponents in Theorem \ref{vktheoremmainparaprod}.
The shaded region satisfies the strong estimate, while for two solid sides of the unit square we only
establish the weak estimates. The two dashed sides of the square represent exponents for which
we show that even the weak estimate fails.
The white triangle in the lower left corner is the region we do not tackle in this paper.

The proof of Theorem \ref{vktheoremmainparaprod} is organized as follows.
Sections \ref{vksectiontelescoping} and \ref{vksectionsummingtrees}
prove estimates for $T_\mathrm{d}$ in the interior of triangle $ABC$.
In Section \ref{vksectionextrange} the rest of bounds for $T_\mathrm{d}$ are obtained.
Section \ref{vksectionrealcase} establishes bounds for $T_\mathrm{c}$ by relating $T_\mathrm{c}$ to $T_\mathrm{d}$.
Finally, in Section \ref{vksectioncounterex} we discuss the counterexamples.
In the closing section we sketch a simpler proof
for points $D$ and $E$ only.

\medskip
\noindent
\textbf{Several remarks.}
Before going into the proofs, we make several simple observations about $T_\mathrm{d}$.
Note that Theorem \ref{vktheoremmainparaprod} also gives estimates for a family of shifted operators
$$ (F,G) \mapsto \sum_{k\in\mathbb{Z}}\, (\mathbb{E}_{k+k_0}^{(1)}F) (\Delta_{k}^{(2)}G) $$
uniformly in $k_0\in\mathbb{Z}$, because the last sum can be rewritten as
$$ \mathrm{D}_{(2^{-k_0},1)} \,T_{\mathrm{d}}\big(\mathrm{D}_{(2^{k_0},1)}F,\, \mathrm{D}_{(2^{k_0},1)}G\big) \,. $$
Here $\mathrm{D}_{(a,1)}$ denotes the non-isotropic dilation\,
$(\mathrm{D}_{(a,1)}F)(x,y):=F(a^{-1}x,y)$.

If $F$ and $G$ are (say) compactly supported, then one can write
\begin{equation}
\label{vkeqsymmetryintro}
T_{\mathrm{d}}(F,G) = FG - \sum_{k\in\mathbb{Z}}\, (\Delta_{k}^{(1)}F) (\mathbb{E}_{k+1}^{(2)}G) \,.
\end{equation}
Combining this with the previous remark and the fact that the pointwise product $FG$ satisfies
H\"{o}lder's inequality,
we see that the set of estimates for $T_\mathrm{d}(F,G)$
is indeed symmetric under interchanging $p$ and $q$, $F$ and $G$.
We use this fact to shorten some of the exposition below.

Furthermore, Theorem \ref{vktheoremmainparaprod} implies bounds on more general dyadic operators
of the following type:
\begin{equation}
\label{vkeqsigns}
\Big\|\sum_{k\in\mathbb{Z}} c_k (\mathbb{E}_{k}^{(1)}F)
(\Delta_{k}^{(2)}G)\Big\|_{\mathrm{L}^{pq/(p+q)}} \,\lesssim_{p,q}
\|F\|_{\mathrm{L}^p} \|G\|_{\mathrm{L}^q} \,,
\end{equation}
for any numbers $c_k$ such that $|c_k|\leq 1$.
Here we restrict ourselves to the interior range \,$1<p,q<\infty$, \,$\frac{1}{p}+\frac{1}{q}>\frac{1}{2}$.\,
One simply uses the known bound for $T_\mathrm{d}(F,\widetilde{G})$ with
\,$\widetilde{G} := \sum_{k\in\mathbb{Z}} c_k \,\Delta_k^{(2)} G$,\,
and the dyadic Littlewood-Paley inequality in the second variable.
Note that the flexibility of having coefficients $c_k$ is implicit in the definition
of $T_\mathrm{c}$, and indeed we will repeat a continuous variant of this argument in Section
\ref{vksectionrealcase}.

\medskip
\noindent
\textbf{Some motivation.}
The one-dimensional bilinear Hilbert transform is an object that motivated
most of the modern multilinear time-frequency analysis.
Lacey and Thiele established its boundedness (in a certain range)
in a pair of breakthrough papers \cite{LT1},\cite{LT2}.
Recently, Demeter and Thiele investigated its two-dimensional analogue in \cite{DT}.
For any two linear maps $A,B\colon\mathbb{R}^2\to\mathbb{R}^2$ they considered
$$ T_{A,B}(F,G)(x,y) := \mathrm{p.v.}\int_{\mathbb{R}^2}
F\big((x,y)+A(s,t)\big) \, G\big((x,y)+B(s,t)\big) \, K(s,t) \,ds dt\,, $$
where $K\colon\mathbb{R}^2\setminus\{0,0\}\to\mathbb{C}$ is a Calder\'{o}n-Zygmund kernel,
i.e.\@ $\hat{K}$ is a symbol satisfying
\begin{equation}
\label{vkeqkernel}
|\partial^\alpha \hat{K}(\xi,\eta)| \,\lesssim_\alpha (\xi^2+\eta^2)^{-|\alpha|/2} \,,
\end{equation}
for all derivatives $\partial^\alpha$ up to some large unspecified order.
In \cite{DT}, the bound
$$ \|T_{A,B}(F,G)\|_{\mathrm{L}^{pq/(p+q)}(\mathbb{R}^2)} \, \lesssim_{A,B,p,q} \,
\|F\|_{\mathrm{L}^p(\mathbb{R}^2)} \|G\|_{\mathrm{L}^q(\mathbb{R}^2)} $$
is proved in the range \,$2<p,q<\infty$, \,$\frac{1}{p}+\frac{1}{q}>\frac{1}{2}$,\, and
for most cases depending on $A$ and $B$.

Some instances of $A,B$ can be handled by an adaptation of the approach from \cite{LT1},\cite{LT2},
while some cases lead the authors of \cite{DT} to invent a ``one-and-a-half-dimensional''
time-frequency analysis.
On the other extreme, some instances of $A,B$ degenerate to the one-dimensional bilinear
Hilbert transform or the pointwise product.
Up to the symmetry obtained by considering the adjoints, the only case of $A,B$ that is left unresolved in \cite{DT} is
\begin{equation}
\label{vkeqcase6op}
T(F,G)(x,y) := \mathrm{p.v.}\int_{\mathbb{R}^2} F(x-s,y) \, G(x,y-t) \, K(s,t) \,ds dt \,.
\end{equation}
This case was denoted ``Case 6'', and as remarked there, it is largely degenerate but still nontrivial,
so the usual wave-packet decompositions showed to be ineffective.
It can also be viewed as the simplest example of higher-dimensional phenomena,
i.e.\@ complications not visible from the perspective of multilinear analysis arising in \cite{LT1},\cite{LT2},
and even in quite general framework such as the one in \cite{MTT} or \cite{DPT}.

\smallskip
Theorem \ref{vktheoremmainparaprod} establishes bounds on the \emph{twisted bilinear multiplier}
(\ref{vkeqcase6op}) for the special case of the symbol
$$ \hat{K}(\xi,\eta) = \sum_{k\in\mathbb{Z}} \, \hat{\varphi}(2^{-k}\xi) \,\hat{\psi}(2^{-k}\eta) \,, $$
i.e.\@ the kernel
$$ K(s,t) = \sum_{k\in\mathbb{Z}} \, 2^k\varphi(2^k s) \,2^k\psi(2^k t) \,, $$
with $\varphi$ and $\psi$ as in the introduction.
A standard technique of ``cone decomposition'' (see \cite{T2}) then addresses general kernels $K$.

\smallskip
Our approach is to first work with the dyadic variant (\ref{vkeqparaproductdyadic}),
and then use the square function introduced by Jones, Seeger, and Wright in \cite{JSW}
to transfer to the continuous case.
Also, we dualize and prefer to consider the corresponding trilinear form
$$ \Lambda_{\mathrm{d}}(F,G,H) := \int_{\mathbb{R}^2} T_{\mathrm{d}}(F,G)(x,y) H(x,y) \,dx dy \,. $$
Another reason why we call this object the twisted paraproduct
is because the functions $F,G,H$ are entwined in a way that
the trilinear form $\Lambda_\mathrm{d}$ does not split naturally into
wavelet coefficients of each function separately, as it does for the ordinary paraproduct.
As a substitute we introduce forms encoding ``entwined wavelet coefficients'',
reminiscent of the Gowers box-norm, which plays an important role in the proof.
These forms keep functions intertwined, and we never attempt to break them
but rather exploit their symmetries in an ``induction on scales'' type of argument.

A difference from the classical theory is that we gradually separate functions $F,G,H$
by repeated applications of the Cauchy-Schwarz inequality and a sort of telescoping identity
that switches between the two variables.
This is opposed to the usual approach to the ordinary paraproduct
(even in the multiparameter case \cite{MPTT1},\cite{MPTT2}),
where the Cauchy-Schwarz inequality is applied at once, and it immediately
splits the form into governing operators like maximal and square functions (or their hybrids).
This dominating procedure requires four steps for $\Lambda_{\mathrm{d}}$,
and generally finitely many steps for ``more entwined'' forms in higher-dimensions, which are
very briefly discussed in the closing section.

\smallskip
There seems to be many other higher-dimensional phenomena worth studying.
Another interesting two-dimensional object, more singular than the twisted paraproduct is
$$ \mathrm{p.v.}\int_{\mathbb{R}} F(x-t,y) \, G(x,y-t) \, \frac{dt}{t} \,. $$
Its boundedness is still an open problem.
One also has to notice that the yet more singular bi-parameter bilinear Hilbert transform
$$ \mathrm{p.v.}\int_{\mathbb{R}^2} F(x-s,y-t) \, G(x+s,y+t) \, \frac{ds}{s} \frac{dt}{t} $$
does not satisfy any $\mathrm{L}^p$ estimates, as shown in \cite{MPTT1}.

\medskip
\noindent
\textbf{Acknowledgement.}
The author would like to thank his faculty advisor Prof.\@ Christoph Thiele
for introducing him to the problem, and for his numerous suggestions on how to improve the exposition.
This and related work would not be possible without his constant support and encouragement.

\section{A few words on the the notation}
\label{vksectionnotation}

A \emph{dyadic interval} is an interval of the form $[2^k l,2^k (l+1))$, for some integers $k$ and $l$.
For each dyadic interval $I$, we denote its left and right halves respectively by $I_\mathrm{left}$ and $I_\mathrm{right}$.
\emph{Dyadic squares} and \emph{dyadic rectangles} in $\mathbb{R}^2$ are defined in the obvious way.
For any dyadic interval $I$, denote the Haar scaling function $\varphi^{\mathrm{d}}_I:=|I|^{-1/2}\mathbf{1}_{I}$
and the Haar wavelet $\psi^{\mathrm{d}}_I:=|I|^{-1/2}(\mathbf{1}_{I_\mathrm{left}}-\mathbf{1}_{I_\mathrm{right}})$.
Martingale averages and differences can be alternatively written in the Haar basis:
$$ \mathbb{E}_{k}f = \sum_{|I|=2^{-k}} \!\big({\textstyle\int_{\mathbb{R}}f\varphi^{\mathrm{d}}_I}\big)\,\varphi^{\mathrm{d}}_I \,,\quad\
\Delta_{k}f = \sum_{|I|=2^{-k}} \!\big({\textstyle\int_{\mathbb{R}}f\psi^{\mathrm{d}}_I}\big)\,\psi^{\mathrm{d}}_I \,. $$
In $\mathbb{R}^2$, every dyadic square $Q$ partitions into four congruent dyadic squares that are called
\emph{children} of $Q$, and conversely, $Q$ is said to be their \emph{parent}.

In all of the following except in Section \ref{vksectionrealcase},
the considered functions are assumed to be \emph{nonnegative} dyadic step functions,
i.e.\@ positive finite linear combinations of characteristic functions of dyadic rectangles.
This reduction is enabled by splitting into positive and negative, real and imaginary parts,
and invoking density arguments.

Let $\mathcal{C}$ denote the collection of all dyadic squares in $\mathbb{R}^2$.
Note that $T_{\mathrm{d}}$ and $\Lambda_{\mathrm{d}}$ can be rewritten as
sums over $\mathcal{C}$:
\begin{align*}
& T_{\mathrm{d}}(F,G)(x,y) = \sum_{I\times J\in\mathcal{C}}
\int_{\mathbb{R}^2}\! F(u,y) G(x,v) \ \varphi^{\mathrm{d}}_I(u)\varphi^{\mathrm{d}}_I(x)
\psi^{\mathrm{d}}_J(v)\psi^{\mathrm{d}}_J(y) \ du dv \,, \\
& \Lambda_{\mathrm{d}}(F,G,H) = \sum_{I\times J\in\mathcal{C}}
\int_{\mathbb{R}^4}\! F(u,y) G(x,v) H(x,y) \ \varphi^{\mathrm{d}}_I(u)\varphi^{\mathrm{d}}_I(x)
\psi^{\mathrm{d}}_J(v)\psi^{\mathrm{d}}_J(y) \ du dx dv dy \,.
\end{align*}

\medskip
In the subsequent discussion we will use one notion from additive combinatorics,
namely the \emph{Gowers box norm}.
It is a two-dimensional variant of a series of norms introduced by Gowers in \cite{G1},\cite{G2}
to give quantitative bounds on Szemer\'{e}di's theorem,
and was used by Shkredov in \cite{S} to give bounds on sizes of sets that do not
contain two-dimensional corners.
Its occurrence in \cite{T} is the one we find the most influential.

For any dyadic square $Q=I\times J$
we first define the \emph{Gowers box inner-product} of four functions $F_1,F_2,F_3,F_4$ as
$$ [F_1,F_2,F_3,F_4]_{\Box(Q)}
:= \frac{1}{|Q|^2}\int_{\!I}\!\int_{\!I}\!\int_{\!J}\!\int_{\!J} F_1(u,v) F_2(x,v) F_3(u,y) F_4(x,y) \,du dx dv dy \,. $$
Then for any function $F$ we introduce the \emph{Gowers box norm} as\footnote{If $F(x,y)$ restricted to $Q$
is discretized and viewed as a matrix, then $\|F\|_{\Box(Q)}$ can be recognized as its
(properly normalized) Schatten $4$-norm, i.e.\@ $\ell^4$ norm of the sequence of its singular values.
This comment gives yet one more immediate proof of inequality (\ref{vkeqboxlessl2}) below.}
$$ \|F\|_{\Box(Q)} := [F,F,F,F]_{\Box(Q)}^{1/4} . $$
It is easy to prove the \emph{box Cauchy-Schwarz inequality}:
\begin{equation}
\label{vkeqboxcsb}
[F_1,F_2,F_3,F_4]_{\Box(Q)} \leq \|F_1\|_{\Box(Q)} \|F_2\|_{\Box(Q)} \|F_3\|_{\Box(Q)} \|F_4\|_{\Box(Q)} \,.
\end{equation}
To see (\ref{vkeqboxcsb}), one has to write $|Q|^2\, [F_1,F_2,F_3,F_4]_{\Box(Q)}$ as
$$ \int_{I}\int_{I} \Big(\int_{J} F_1(u,v) F_2(x,v) dv \Big)
\Big( \int_{J} F_3(u,y) F_4(x,y) dy \Big) \,du dx \,, $$
and apply the ordinary Cauchy-Schwarz inequality in $u,x\in I$.
Then one rewrites the result as
\begin{align*}
& \bigg( \int_{J}\int_{J} \Big(\int_{I} F_1(u,v) F_1(u,y) du \Big)
\Big( \int_{I} F_2(x,v) F_2(x,y) dx \Big) \,dv dy \bigg)^{\frac{1}{2}} \\
& \cdot \bigg( \int_{J}\int_{J} \Big(\int_{I} F_3(u,v) F_3(u,y) du \Big)
\Big( \int_{I} F_4(x,v) F_4(x,y) dx \Big) \,dv dy \bigg)^{\frac{1}{2}} \,,
\end{align*}
and applies the Cauchy-Schwarz inequality again, this time in $v,y\in J$.
From here it is also easily seen that $\|\cdot\|_{\Box(Q)}$ is really a norm on functions
supported on $Q$.
On the other hand, a straightforward application of the (ordinary) Cauchy-Schwarz inequality yields
\begin{equation}
\label{vkeqboxlessl2}
\|F\|_{\Box(Q)} \leq \Big(\frac{1}{|Q|}\int_{Q}|F|^2\Big)^{1/2} .
\end{equation}
An alternative way to verify (\ref{vkeqboxlessl2}) is to notice that it is a special case
of the strong $(\frac{1}{2},\frac{1}{2},\frac{1}{2},\frac{1}{2})$
estimate for the quadrilinear form
$$ (F_1,F_2,F_3,F_4)\mapsto|Q|^2[F_1,F_2,F_3,F_4]_{\Box(Q)} \,. $$
Since $(\frac{1}{2},\frac{1}{2},\frac{1}{2},\frac{1}{2})$
is in the convex hull of $(1,0,0,1)$ and $(0,1,1,0)$, we can use complex interpolation, and
it is enough to verify strong type estimates for the latter points, which is trivial.

\section{Telescoping identities over trees}
\label{vksectiontelescoping}

A \emph{tree} is a collection $\mathcal{T}$ of dyadic squares in $\mathbb{R}^2$ such that there exists $Q_\mathcal{T}\in\mathcal{T}$,
called the \emph{root} of $\mathcal{T}$, satisfying $Q\subseteq Q_\mathcal{T}$ for every $Q\in\mathcal{T}$.
A tree $\mathcal{T}$ is said to be \emph{convex} if whenever $Q_1\subseteq Q_2\subseteq Q_3$, and $Q_1,Q_3\in\mathcal{T}$,
then also $Q_2\in\mathcal{T}$.
We will only be working with finite convex trees.
A \emph{leaf} of $\mathcal{T}$ is a square that is not contained in $\mathcal{T}$, but its parent is.
The family of leaves of $\mathcal{T}$ will be denoted $\mathcal{L}(\mathcal{T})$.
Notice that for every finite convex tree $\mathcal{T}$ squares in $\mathcal{L}(\mathcal{T})$
partition the root $Q_\mathcal{T}$.

For any finite convex tree $\mathcal{T}$ we define the local variant of $\Lambda_\mathrm{d}$
that only sums over the squares in $\mathcal{T}$, i.e.
$$ \Lambda_{\mathcal{T}}(F,G,H) := \!\sum_{I\times J\in\mathcal{T}}
\int_{\mathbb{R}^4}\! F(u,y) G(x,v) H(x,y) \,
\varphi^{\mathrm{d}}_I(u)\varphi^{\mathrm{d}}_I(x)\psi^{\mathrm{d}}_J(v)\psi^{\mathrm{d}}_J(y) \, du dx dv dy \,. $$
It turns out to be handy to also introduce a slightly more general quadrilinear form
\begin{align*}
\Theta_{\mathcal{T}}^{(2)}(F_1,F_2,F_3,F_4)
:= \sum_{I\times J\in\mathcal{T}}
\int_{\mathbb{R}^4}\! F_1(u,v) F_2(x,v) F_3(u,y) F_4(x,y) \qquad & \\[-2mm]
\varphi^{\mathrm{d}}_I(u)\varphi^{\mathrm{d}}_I(x)\psi^{\mathrm{d}}_J(v)\psi^{\mathrm{d}}_J(y) \ du dv dx dy & \,,
\end{align*}
and its modified counterpart
\begin{align*}
\Theta_{\mathcal{T}}^{(1)}(F_1,F_2,F_3,F_4)
:= \sum_{I\times J\in\mathcal{T}}\sum_{j\in\{\mathrm{left},\mathrm{right}\}}
\int_{\mathbb{R}^4}\! F_1(u,v) F_2(x,v) F_3(u,y) F_4(x,y) & \\[-1mm]
\psi^{\mathrm{d}}_I(u)\psi^{\mathrm{d}}_I(x)\varphi^{\mathrm{d}}_{J_j}(v)\varphi^{\mathrm{d}}_{J_j}(y) \ du dv dx dy & \,.
\end{align*}
Note that in $\Theta_{\mathcal{T}}^{(1)}$ we actually sum over a certain collection of dyadic rectangles whose horizontal side is
twice longer than the vertical one.
This is just a technicality to make the arguments simpler at the cost of losing (geometric) symmetry.
Also observe that $\Lambda_{\mathcal{T}}(F,G,H)$ can be recognized as $\Theta^{(2)}_{\mathcal{T}}(\mathbf{1},G,F,H)$,
where $\mathbf{1}$ is the constant function on $\mathbb{R}^2$.

Let us also denote for any collection $\mathcal{F}$ of dyadic squares:
\begin{align}
\Xi_{\mathcal{F}}(F_1,F_2,F_3,F_4)
:= \sum_{Q\in\mathcal{F}} \,|Q|\, \big[F_1,F_2,F_3,F_4\big]_{\Box(Q)} \,,
\label{vkeqxidef}
\end{align}
or equivalently
\begin{align}
\Xi_{\mathcal{F}}(F_1,F_2,F_3,F_4)
= \sum_{I\times J\in\mathcal{F}}
\int_{\mathbb{R}^4}\! F_1(u,v) F_2(x,v) F_3(u,y) F_4(x,y) \qquad & \nonumber \\[-2mm]
\varphi^{\mathrm{d}}_I(u)\varphi^{\mathrm{d}}_I(x)\varphi^{\mathrm{d}}_J(v)\varphi^{\mathrm{d}}_J(y) \ du dv dx dy & \label{vkeqxialt} \,.
\end{align}

The following lemma is the core of our method.
\begin{lemma}[Telescoping identity]
\label{vklemmatelescoping}
For any finite convex tree $\mathcal{T}$ with root $Q_\mathcal{T}$ we have
\begin{align*}
& \Theta_{\mathcal{T}}^{(1)}(F_1,F_2,F_3,F_4) + \Theta_{\mathcal{T}}^{(2)}(F_1,F_2,F_3,F_4) \\
& = \ \Xi_{\mathcal{L}(\mathcal{T})}(F_1,F_2,F_3,F_4) - \Xi_{\{Q_\mathcal{T}\}}(F_1,F_2,F_3,F_4) \,.
\end{align*}
\end{lemma}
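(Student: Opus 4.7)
The plan is to reduce both $\Theta_{\mathcal{T}}^{(1)}$ and $\Theta_{\mathcal{T}}^{(2)}$ to a common ``all-scaling'' bookkeeping quantity on dyadic rectangles and then telescope along the tree. First, I would record the one elementary algebraic fact that drives everything: for any dyadic interval $I$ with children $I_\mathrm{left},I_\mathrm{right}$,
$$\varphi_I^{\mathrm{d}}(u)\varphi_I^{\mathrm{d}}(x) + \psi_I^{\mathrm{d}}(u)\psi_I^{\mathrm{d}}(x) \;=\; \sum_{i\in\{\mathrm{left},\mathrm{right}\}}\varphi_{I_i}^{\mathrm{d}}(u)\varphi_{I_i}^{\mathrm{d}}(x).$$
This is immediate from the definitions: both sides equal $2|I|^{-1}\mathbf{1}_{I_\mathrm{left}}(u)\mathbf{1}_{I_\mathrm{left}}(x) + 2|I|^{-1}\mathbf{1}_{I_\mathrm{right}}(u)\mathbf{1}_{I_\mathrm{right}}(x)$ and vanish off the diagonal blocks (equivalently, this is the kernel identity $\mathbb{E}_k + \Delta_k = \mathbb{E}_{k+1}$ on the diagonal).

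Next, for any dyadic rectangle $R = I\times J$ I would introduce the shorthand
$$A(R) := \int_{\mathbb{R}^4}\! F_1(u,v)F_2(x,v)F_3(u,y)F_4(x,y)\,\varphi_I^{\mathrm{d}}(u)\varphi_I^{\mathrm{d}}(x)\varphi_J^{\mathrm{d}}(v)\varphi_J^{\mathrm{d}}(y)\,du\,dx\,dv\,dy,$$
so that by (\ref{vkeqxialt}) one has $\Xi_{\mathcal{F}}(F_1,F_2,F_3,F_4) = \sum_{R\in\mathcal{F}} A(R)$ for every collection $\mathcal{F}$ of dyadic squares. Applying the above Haar identity inside each summand of $\Theta_{\mathcal{T}}^{(2)}$ in the $(v,y)$ pair yields
$$\Theta_{\mathcal{T}}^{(2)}(F_1,F_2,F_3,F_4) \;=\; \sum_{I\times J\in\mathcal{T}}\Big(\sum_{j\in\{\mathrm{left},\mathrm{right}\}} A(I\times J_j) \,-\, A(I\times J)\Big),$$
while applying it inside each summand of $\Theta_{\mathcal{T}}^{(1)}$ in the $(u,x)$ pair yields
$$\Theta_{\mathcal{T}}^{(1)}(F_1,F_2,F_3,F_4) \;=\; \sum_{I\times J\in\mathcal{T}}\sum_{j\in\{\mathrm{left},\mathrm{right}\}}\Big(\sum_{i\in\{\mathrm{left},\mathrm{right}\}} A(I_i\times J_j) \,-\, A(I\times J_j)\Big).$$
Adding the two, the intermediate ``half-refined'' terms $A(I\times J_j)$ cancel, so that for $Q = I\times J$ ranging over $\mathcal{T}$,
$$\Theta_{\mathcal{T}}^{(1)}+\Theta_{\mathcal{T}}^{(2)} \;=\; \sum_{Q\in\mathcal{T}}\Big(\sum_{Q'\text{ child of }Q} A(Q') \,-\, A(Q)\Big).$$

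Finally, I would perform the tree telescoping. A dyadic square $Q'$ appears in the inner child-sum of some $Q\in\mathcal{T}$ exactly when its parent lies in $\mathcal{T}$; by convexity of $\mathcal{T}$ this happens iff $Q'\in(\mathcal{T}\setminus\{Q_\mathcal{T}\})\cup\mathcal{L}(\mathcal{T})$, and the two classes are disjoint. Hence every $Q'\in\mathcal{T}\setminus\{Q_\mathcal{T}\}$ contributes $+A(Q')$ once and $-A(Q')$ once and cancels, the root $Q_\mathcal{T}$ contributes only $-A(Q_\mathcal{T})$, and each leaf contributes only $+A(Q')$, giving
$$\Theta_{\mathcal{T}}^{(1)}+\Theta_{\mathcal{T}}^{(2)} \;=\; \sum_{Q'\in\mathcal{L}(\mathcal{T})} A(Q') \,-\, A(Q_\mathcal{T}) \;=\; \Xi_{\mathcal{L}(\mathcal{T})} - \Xi_{\{Q_\mathcal{T}\}},$$
which is exactly the asserted identity.

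There is really no obstacle of substance here; the content of the lemma is the observation that $\Theta^{(2)}$ encodes a one-step refinement of the $J$-side of each square, while $\Theta^{(1)}$ encodes the full refinement of the $I$-side of those one-step-$J$-refined rectangles, so that their sum is precisely the four-fold child refinement of $Q$ minus $Q$ itself. The only point requiring attention is the combinatorial bookkeeping of signs on the tree, for which convexity of $\mathcal{T}$ is both essential and sufficient.
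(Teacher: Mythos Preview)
Your proof is correct and follows essentially the same approach as the paper: both rely on the same Haar identity $\varphi_I^{\mathrm{d}}\otimes\varphi_I^{\mathrm{d}}+\psi_I^{\mathrm{d}}\otimes\psi_I^{\mathrm{d}}=\sum_i\varphi_{I_i}^{\mathrm{d}}\otimes\varphi_{I_i}^{\mathrm{d}}$ and the same parent--child telescoping over the convex tree. The only cosmetic difference is ordering: the paper first reduces to a single square and then verifies the pointwise identity (\ref{vkeqteleproof1})--(\ref{vkeqteleproof2}), whereas you first rewrite each $\Theta^{(i)}$-summand via the Haar identity (your $A(R)$ bookkeeping) and then telescope globally; the content is identical.
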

\begin{proof}
We first note that it is enough to verify the identity when $\mathcal{T}$ consists of only one square,
as in general the right hand side can be expanded into a telescoping sum
$$ \sum_{Q\in\mathcal{T}} \bigg(
\sum_{\widetilde{Q}\textrm{ is \!a \!child \!of }Q}
\!\!\!\!\!\!\Xi_{\{\widetilde{Q}\}} \ \, - \ \Xi_{\{Q\}} \bigg) \,. $$
Here is where we use that $\mathcal{T}$ is convex, which means that each square
$Q\in\mathcal{T}\setminus\{Q_\mathcal{T}\}$
has all four children and the parent in $\mathcal{T}\cup\mathcal{L}(\mathcal{T})$.

Second, observe that when $\mathcal{T}$ has only one square $I\times J$, then using (\ref{vkeqxialt}) the identity
reduces to showing
\begin{align}
& \sum_{j\in\{\mathrm{left},\mathrm{right}\}}\!\!\!
\psi^{\mathrm{d}}_I(u)\psi^{\mathrm{d}}_I(x)\varphi^{\mathrm{d}}_{J_j}(v)\varphi^{\mathrm{d}}_{J_j}(y)
\ + \ \varphi^{\mathrm{d}}_I(u)\varphi^{\mathrm{d}}_I(x)\psi^{\mathrm{d}}_{J}(v)\psi^{\mathrm{d}}_{J}(y) \nonumber \\
& = \sum_{i,j\in\{\mathrm{left},\mathrm{right}\}}\!\!\!
\varphi^{\mathrm{d}}_{I_i}(u)\varphi^{\mathrm{d}}_{I_i}(x)\varphi^{\mathrm{d}}_{J_j}(v)\varphi^{\mathrm{d}}_{J_j}(y)
\ - \ \varphi^{\mathrm{d}}_I(u)\varphi^{\mathrm{d}}_I(x)\varphi^{\mathrm{d}}_{J}(v)\varphi^{\mathrm{d}}_{J}(y) \,, \label{vkeqteleproof1}
\end{align}
multiplying by \,$F_1(u,v) F_2(x,v) F_3(u,y) F_4(x,y)$\, and finally integrating.
By adding and subtracting one extra term,
equality (\ref{vkeqteleproof1}) can be further rewritten as
\begin{align}
& \Big(\sum_{j\in\{\mathrm{left},\mathrm{right}\}}\!\!\!\!\!\!\varphi^{\mathrm{d}}_{J_j}(v)\varphi^{\mathrm{d}}_{J_j}(y)\Big)
\bigg( \varphi^{\mathrm{d}}_{I}(u)\varphi^{\mathrm{d}}_{I}(x)+\psi^{\mathrm{d}}_{I}(u)\psi^{\mathrm{d}}_{I}(x)
-\!\!\!\!\sum_{i\in\{\mathrm{left},\mathrm{right}\}}\!\!\!\!\!\!\varphi^{\mathrm{d}}_{I_i}(u)\varphi^{\mathrm{d}}_{I_i}(x) \bigg) \nonumber \\
& + \ \varphi^{\mathrm{d}}_I(u)\varphi^{\mathrm{d}}_I(x) \bigg( \varphi^{\mathrm{d}}_{J}(v)\varphi^{\mathrm{d}}_{J}(y)
+\psi^{\mathrm{d}}_{J}(v)\psi^{\mathrm{d}}_{J}(y)
-\!\!\!\!\sum_{j\in\{\mathrm{left},\mathrm{right}\}}\!\!\!\!\!\!\varphi^{\mathrm{d}}_{J_j}(v)\varphi^{\mathrm{d}}_{J_j}(y) \bigg) = 0 \,.
\label{vkeqteleproof2}
\end{align}
It remains to notice
\begin{align*}
& \varphi^{\mathrm{d}}_I(u)\varphi^{\mathrm{d}}_I(x) + \psi^{\mathrm{d}}_I(u)\psi^{\mathrm{d}}_I(x) \\
& = \ |I|^{-1}\big(\mathbf{1}_{I_\mathrm{left}}(u)+\mathbf{1}_{I_\mathrm{right}}(u)\big)
\big(\mathbf{1}_{I_\mathrm{left}}(x)+\mathbf{1}_{I_\mathrm{right}}(x)\big) \\
& \ \,+ |I|^{-1}\big(\mathbf{1}_{I_\mathrm{left}}(u)-\mathbf{1}_{I_\mathrm{right}}(u)\big)
\big(\mathbf{1}_{I_\mathrm{left}}(x)-\mathbf{1}_{I_\mathrm{right}}(x)\big) \\
& = \ 2|I|^{-1}\mathbf{1}_{I_\mathrm{left}}(u)\mathbf{1}_{I_\mathrm{left}}(x)
+ 2|I|^{-1}\mathbf{1}_{I_\mathrm{right}}(u)\mathbf{1}_{I_\mathrm{right}}(x) \\
& = \varphi^{\mathrm{d}}_{I_\mathrm{left}}(u)\varphi^{\mathrm{d}}_{I_\mathrm{left}}(x)
+ \varphi^{\mathrm{d}}_{I_\mathrm{right}}(u)\varphi^{\mathrm{d}}_{I_\mathrm{right}}(x) \,,
\end{align*}
and analogously
$$ \varphi^{\mathrm{d}}_J(v)\varphi^{\mathrm{d}}_J(y) + \psi^{\mathrm{d}}_J(v)\psi^{\mathrm{d}}_J(y)=
\varphi^{\mathrm{d}}_{J_\mathrm{left}}(v)\varphi^{\mathrm{d}}_{J_\mathrm{left}}(y)
+ \varphi^{\mathrm{d}}_{J_\mathrm{right}}(v)\varphi^{\mathrm{d}}_{J_\mathrm{right}}(y) \,. $$
\end{proof}

Let us remark that, since we assume $F_1,F_2,F_3,F_4\geq 0$, we have
$$ \Xi_{\{Q_\mathcal{T}\}}(F_1,F_2,F_3,F_4)\geq 0\,, $$
so the right hand side of the telescoping identity
is at most $\Xi_{\mathcal{L}(\mathcal{T})}(F_1,F_2,F_3,F_4)$.
We will use this observation without further mention.

\smallskip
The next lemma will be used to gradually control the forms $\Theta_\mathcal{T}^{(1)}$, $\Theta_\mathcal{T}^{(2)}$.
\begin{lemma}[Reduction inequalities]
\label{vklemmareduction}
\begin{align*}
\big|\Theta^{(1)}_{\mathcal{T}}(F_1,F_2,F_3,F_4)\big| & \ \leq
\ \Theta^{(1)}_{\mathcal{T}}(F_1,F_1,F_3,F_3)^{1/2}
\ \Theta^{(1)}_{\mathcal{T}}(F_2,F_2,F_4,F_4)^{1/2} \\
\big|\Theta^{(2)}_{\mathcal{T}}(F_1,F_2,F_3,F_4)\big| & \ \leq
\ \Theta^{(2)}_{\mathcal{T}}(F_1,F_2,F_1,F_2)^{1/2}
\ \Theta^{(2)}_{\mathcal{T}}(F_3,F_4,F_3,F_4)^{1/2}
\end{align*}
\end{lemma}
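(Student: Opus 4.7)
The plan is to deduce both inequalities from two nested applications of the ordinary Cauchy-Schwarz inequality: first on each individual summand (``tile-wise''), then on the sum over $\mathcal{T}$. The underlying structural observation I will exploit is that in each weight product exactly one of the two tensorial factors is pointwise nonnegative, and will serve as the measure in the tile-wise Cauchy-Schwarz, while the other, possibly signed, factor is itself a tensor in the pair of integration variables being paired against each other, so that it can be absorbed cleanly into the two halves of the Cauchy-Schwarz.

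For $\Theta^{(2)}$, I will pair $(F_1,F_2)$ against $(F_3,F_4)$. Fix a tile $I\times J\in\mathcal{T}$ and rewrite its contribution as
\[ \int\!\!\int \varphi^{\mathrm{d}}_I(u)\varphi^{\mathrm{d}}_I(x)\,\Phi(u,x)\,\Psi(u,x)\,du\,dx, \]
with $\Phi(u,x):=\int F_1(u,v)F_2(x,v)\psi^{\mathrm{d}}_J(v)\,dv$ and $\Psi(u,x):=\int F_3(u,y)F_4(x,y)\psi^{\mathrm{d}}_J(y)\,dy$. Cauchy-Schwarz in $(u,x)$ with respect to the nonnegative weight $\varphi^{\mathrm{d}}_I(u)\varphi^{\mathrm{d}}_I(x)$ dominates this by a geometric mean; expanding $\Phi^2$ as a double integral over $v$ and $v'$ and relabeling $v'\mapsto y$ identifies the first of these factors as the single-tile contribution to $\Theta^{(2)}_{\mathcal{T}}(F_1,F_2,F_1,F_2)$, which is manifestly nonnegative. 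The other factor similarly produces the single-tile contribution to $\Theta^{(2)}_{\mathcal{T}}(F_3,F_4,F_3,F_4)$. A concluding Cauchy-Schwarz on the sum over $I\times J\in\mathcal{T}$ then yields the first inequality.

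For $\Theta^{(1)}$ the roles of the axes in the weight are reversed: $\varphi^{\mathrm{d}}_{J_j}(v)\varphi^{\mathrm{d}}_{J_j}(y)$ is now the nonnegative tensor and $\psi^{\mathrm{d}}_I(u)\psi^{\mathrm{d}}_I(x)$ is the signed one, so the correct pairing becomes $(F_1,F_3)$ against $(F_2,F_4)$. Fixing a tile and $j\in\{\mathrm{left},\mathrm{right}\}$, I would use the tensor structure of $\psi^{\mathrm{d}}_I(u)\psi^{\mathrm{d}}_I(x)$ to split the $(u,x)$-integral at fixed $(v,y)$ and write the summand as
\[ \int\!\!\int \varphi^{\mathrm{d}}_{J_j}(v)\varphi^{\mathrm{d}}_{J_j}(y)\Big(\!\int\!\psi^{\mathrm{d}}_I(u)F_1(u,v)F_3(u,y)\,du\Big)\Big(\!\int\!\psi^{\mathrm{d}}_I(x)F_2(x,v)F_4(x,y)\,dx\Big)dv\,dy. \]
A Cauchy-Schwarz in $(v,y)$ against the nonnegative weight, together with the same dummy-variable trick as before, returns the single-tile, single-child contributions to $\Theta^{(1)}_{\mathcal{T}}(F_1,F_1,F_3,F_3)$ and $\Theta^{(1)}_{\mathcal{T}}(F_2,F_2,F_4,F_4)$, which are again nonnegative. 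One more Cauchy-Schwarz on the double sum over $I\times J\in\mathcal{T}$ and $j$ closes the argument.

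There is no serious analytical obstacle here; the whole lemma is a purely formal consequence of choosing the right pair of variables in which to perform the tile-wise Cauchy-Schwarz. The only point requiring attention is the forced choice of pairing --- $(F_1,F_2)$-vs-$(F_3,F_4)$ for $\Theta^{(2)}$ but $(F_1,F_3)$-vs-$(F_2,F_4)$ for $\Theta^{(1)}$ --- dictated in each case by which half of the weight is nonnegative. This mirrored asymmetry between the two forms is precisely what will make it possible in the subsequent argument to alternate the two estimates and thereby separate all four functions in finitely many Cauchy-Schwarz steps.
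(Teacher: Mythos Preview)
Your proposal is correct and follows essentially the same approach as the paper: rewrite each tile's contribution so that the nonnegative part of the weight serves as the measure and the two inner integrals (over the variable carrying the $\psi$) appear as a product of two functions of the remaining pair, apply Cauchy--Schwarz in that pair, and then apply Cauchy--Schwarz once more over the sum of tiles. Your added commentary on why the pairing is forced (and hence asymmetric between $\Theta^{(1)}$ and $\Theta^{(2)}$) is a helpful gloss, but the underlying argument is identical to the paper's.
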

\begin{proof}
Rewrite $\Theta_{\mathcal{T}}^{(2)}(F_1,F_2,F_3,F_4)$ as
\begin{align*}
& \sum_{I\times J\in\mathcal{T}}
\int_{\mathbb{R}^2} \bigg( \int_{\mathbb{R}} F_1(u,v) F_2(x,v) \psi^{\mathrm{d}}_J(v) dv \bigg) \\
& \qquad\quad \cdot\bigg( \int_{\mathbb{R}} F_3(u,y) F_4(x,y) \psi^{\mathrm{d}}_J(y) dy \bigg)
\,\varphi^{\mathrm{d}}_I(u)\varphi^{\mathrm{d}}_I(x)\, du dx \,,
\end{align*}
and apply the Cauchy-Schwarz inequality,
first over $(u,x)\in I\times I$,
and then over $I\times J\in\mathcal{T}$.
The inequality for $\Theta_{\mathcal{T}}^{(1)}$ is proved similarly.
\end{proof}

\smallskip
Now we are ready to prove a local estimate, which will be ``integrated'' to a global one in the next section.
\begin{proposition}[Single tree estimate]
\label{vkpropsingletree}
For any finite convex tree $\mathcal{T}$ we have
\begin{equation}
\label{vkeqsingletreetheta}
\big|\Theta^{(2)}_{\mathcal{T}}(F_1,F_2,F_3,F_4)\big| \ \leq \
2|Q_\mathcal{T}| \, \prod_{j=1}^{4}
\max_{Q\in\mathcal{L}(\mathcal{T})}\!\left\|F_j\right\|_{\Box(Q)} \,.
\end{equation}
In particular
\begin{equation}
\label{vkeqsingletree}
\big|\Lambda_{\mathcal{T}}(F,G,H)\big| \, \leq \, 2|Q_\mathcal{T}|\,
\big(\max_{Q\in\mathcal{L}(\mathcal{T})}\!\left\|F\right\|_{\Box(Q)}\!\big)
\big(\max_{Q\in\mathcal{L}(\mathcal{T})}\!\left\|G\right\|_{\Box(Q)}\!\big)
\big(\max_{Q\in\mathcal{L}(\mathcal{T})}\!\left\|H\right\|_{\Box(Q)}\!\big) \,.
\end{equation}
\end{proposition}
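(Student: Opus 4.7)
The plan is to alternate the reduction inequalities of Lemma~\ref{vklemmareduction} with the telescoping identity of Lemma~\ref{vklemmatelescoping}, four steps in total, until both $\Theta$-forms appearing in the telescoping identity become manifestly nonneg and are thus controlled by $\Xi_{\mathcal{L}(\mathcal{T})}$. The first step would be to apply Lemma~\ref{vklemmareduction},
$$\big|\Theta^{(2)}_{\mathcal{T}}(F_1,F_2,F_3,F_4)\big|^2\leq \Theta^{(2)}_{\mathcal{T}}(F_1,F_2,F_1,F_2)\,\Theta^{(2)}_{\mathcal{T}}(F_3,F_4,F_3,F_4),$$
reducing matters to bounding each factor of the form $\Theta^{(2)}_{\mathcal{T}}(G,H,G,H)$. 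Integrating first in $v,y$, each summand of this form factors as $\big(\int G(u,v)H(x,v)\psi^{\mathrm{d}}_J(v)\,dv\big)^2$, which combined with $\varphi^{\mathrm{d}}_I(u)\varphi^{\mathrm{d}}_I(x)\geq 0$ shows $\Theta^{(2)}_{\mathcal{T}}(G,H,G,H)\geq 0$. The second step is the telescoping identity, which together with $\Xi_{\{Q_{\mathcal{T}}\}}\geq 0$ gives
$$\Theta^{(2)}_{\mathcal{T}}(G,H,G,H)\leq \Xi_{\mathcal{L}(\mathcal{T})}(G,H,G,H)+\big|\Theta^{(1)}_{\mathcal{T}}(G,H,G,H)\big|.$$

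The $\Xi$-term is dispatched by the box Cauchy--Schwarz (\ref{vkeqboxcsb}) and the identity $\sum_{Q\in\mathcal{L}(\mathcal{T})}|Q|=|Q_{\mathcal{T}}|$, giving the bound $|Q_{\mathcal{T}}|(\max_Q\|G\|_{\Box(Q)})^2(\max_Q\|H\|_{\Box(Q)})^2$. For the residual $\Theta^{(1)}$-term the third step is another application of Lemma~\ref{vklemmareduction}, now with the pairing appropriate to $\Theta^{(1)}$:
$$\big|\Theta^{(1)}_{\mathcal{T}}(G,H,G,H)\big|\leq \Theta^{(1)}_{\mathcal{T}}(G,G,G,G)^{1/2}\Theta^{(1)}_{\mathcal{T}}(H,H,H,H)^{1/2},$$
bringing us to the fully diagonal forms. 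At this level, by repeated factorization into squared integrals (once in $(v,y)$ and, after re-expanding the square, once more in $(u,x)$), both $\Theta^{(1)}_{\mathcal{T}}(F,F,F,F)$ and $\Theta^{(2)}_{\mathcal{T}}(F,F,F,F)$ are nonneg, so both $\Theta$-terms on the left of the telescoping identity drop and the fourth step yields
$$\Theta^{(1)}_{\mathcal{T}}(F,F,F,F)\leq \Xi_{\mathcal{L}(\mathcal{T})}(F,F,F,F)=\sum_{Q\in\mathcal{L}(\mathcal{T})}|Q|\,\|F\|_{\Box(Q)}^4\leq |Q_{\mathcal{T}}|\big(\max_Q\|F\|_{\Box(Q)}\big)^4.$$

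Chaining the four estimates yields $|\Theta^{(1)}_{\mathcal{T}}(G,H,G,H)|\leq|Q_{\mathcal{T}}|(\max_Q\|G\|_{\Box(Q)})^2(\max_Q\|H\|_{\Box(Q)})^2$, hence $\Theta^{(2)}_{\mathcal{T}}(G,H,G,H)\leq 2|Q_{\mathcal{T}}|(\max_Q\|G\|_{\Box(Q)})^2(\max_Q\|H\|_{\Box(Q)})^2$; substituting into the opening Cauchy--Schwarz and taking a square root delivers (\ref{vkeqsingletreetheta}) with the stated constant $2$. The trilinear estimate (\ref{vkeqsingletree}) is immediate from $\Lambda_{\mathcal{T}}(F,G,H)=\Theta^{(2)}_{\mathcal{T}}(\mathbf{1},G,F,H)$ and $\|\mathbf{1}\|_{\Box(Q)}=1$.

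I expect the main obstacle to be the realization that positivity of the $\Theta$-forms only materializes at the fully diagonal level $(F,F,F,F)$: for the mixed diagonal $\Theta^{(1)}(G,H,G,H)$ the sign is indefinite because the factor $\psi^{\mathrm{d}}_I(u)\psi^{\mathrm{d}}_I(x)$ survives. This is what forces two Cauchy--Schwarz applications with different pairings---the one dictated by $\Theta^{(2)}$ first, then the one dictated by $\Theta^{(1)}$---before the telescoping identity can close the estimate.
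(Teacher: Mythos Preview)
Your proof is correct and follows essentially the same four-step alternation of Lemma~\ref{vklemmareduction} and Lemma~\ref{vklemmatelescoping} as the paper's own argument; you simply present it top-down (reduce first, then telescope) whereas the paper works bottom-up from the diagonal terms, as summarized in its Figure~\ref{vkdiagram}. One minor remark: your claim that $\Theta^{(1)}_{\mathcal{T}}(F,F,F,F)\geq 0$ is true but not actually needed---only the nonnegativity of $\Theta^{(2)}_{\mathcal{T}}(F,F,F,F)$ is used to pass from the telescoping identity to the bound $\Theta^{(1)}_{\mathcal{T}}(F,F,F,F)\leq\Xi_{\mathcal{L}(\mathcal{T})}(F,F,F,F)$.
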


\begin{proof}
The proof of (\ref{vkeqsingletreetheta}) consists of several alternating applications
of Lemma \ref{vklemmatelescoping} and Lemma \ref{vklemmareduction}.
Start with four non-negative functions\footnote{We have changed the notation in the proof from $F_j$ to $G_j$
to avoid the confusion, since Lemma \ref{vklemmatelescoping} and Lemma \ref{vklemmareduction}
will be applied for various choices of $F_j$.}
$G_1,G_2,G_3,G_4$, and normalize:
\begin{equation}
\label{vkeqnormalization}
\max_{Q\in\mathcal{L}(\mathcal{T})}\!\left\|G_j\right\|_{\Box(Q)} = 1 \,,
\end{equation}
for $j=1,2,3,4$, since the inequality is homogenous.
By scale invariance, we may also assume $|Q_\mathcal{T}|=1$.
Observe that \,$\Theta^{(2)}_{\mathcal{T}}(G_j,G_j,G_j,G_j)\geq 0$,\,
since it can be written as
$$ \sum_{I\times J\in\mathcal{T}}
\int_{\mathbb{R}^2} \bigg( \int_{\mathbb{R}} G_j(u,y) G_j(x,y) \psi^{\mathrm{d}}_J(y) dy \bigg)^2
\varphi^{\mathrm{d}}_I(u) \varphi^{\mathrm{d}}_I(x) \ du dx \,. $$
Thus, from the telescoping identity we get
$$ \Theta^{(1)}_{\mathcal{T}}(G_j,G_j,G_j,G_j) \leq \Xi_{\mathcal{L}(\mathcal{T})}(G_j,G_j,G_j,G_j) \,, $$
and then from (\ref{vkeqnormalization}), (\ref{vkeqxidef}), and the fact that $\mathcal{L}(\mathcal{T})$
partitions $Q_\mathcal{T}$:
$$ \Theta^{(1)}_{\mathcal{T}}(G_j,G_j,G_j,G_j) \leq |Q_\mathcal{T}| = 1 \,. $$
\begin{figure}[t]
{\small $$ \xymatrix @C=-1.78cm {
& & & \Theta_{\mathcal{T}}^{(2)}(G_1,G_2,G_3,G_4) \ar@{->}[dll] \ar@{->}[drr] & & & \\
& \Theta_{\mathcal{T}}^{(2)}(G_1,G_2,G_1,G_2) \ar@{-->}[d] & & & &
\Theta_{\mathcal{T}}^{(2)}(G_3,G_4,G_3,G_4) \ar@{-->}[d] & \\
& \Theta_{\mathcal{T}}^{(1)}(G_1,G_2,G_1,G_2) \ar@{->}[dl] \ar@{->}[dr] & & & &
\Theta_{\mathcal{T}}^{(1)}(G_3,G_4,G_3,G_4) \ar@{->}[dl] \ar@{->}[dr] & \\
\Theta_{\mathcal{T}}^{(1)}(G_1,G_1,G_1,G_1) \ar@{-->}[d] & & \Theta_{\mathcal{T}}^{(1)}(G_2,G_2,G_2,G_2) \ar@{-->}[d] & &
\Theta_{\mathcal{T}}^{(1)}(G_3,G_3,G_3,G_3) \ar@{-->}[d] & & \Theta_{\mathcal{T}}^{(1)}(G_4,G_4,G_4,G_4) \ar@{-->}[d] \\
\Theta_{\mathcal{T}}^{(2)}(G_1,G_1,G_1,G_1) & & \Theta_{\mathcal{T}}^{(2)}(G_2,G_2,G_2,G_2) & &
\Theta_{\mathcal{T}}^{(2)}(G_3,G_3,G_3,G_3) & & \Theta_{\mathcal{T}}^{(2)}(G_4,G_4,G_4,G_4)
} $$ }
\caption{Schematic presentation of the proof of Proposition \ref{vkpropsingletree}.
A solid arrow denotes an application of the reduction inequality,
while a broken arrow denotes an application of the telescoping identity.}
\label{vkdiagram}
\end{figure}
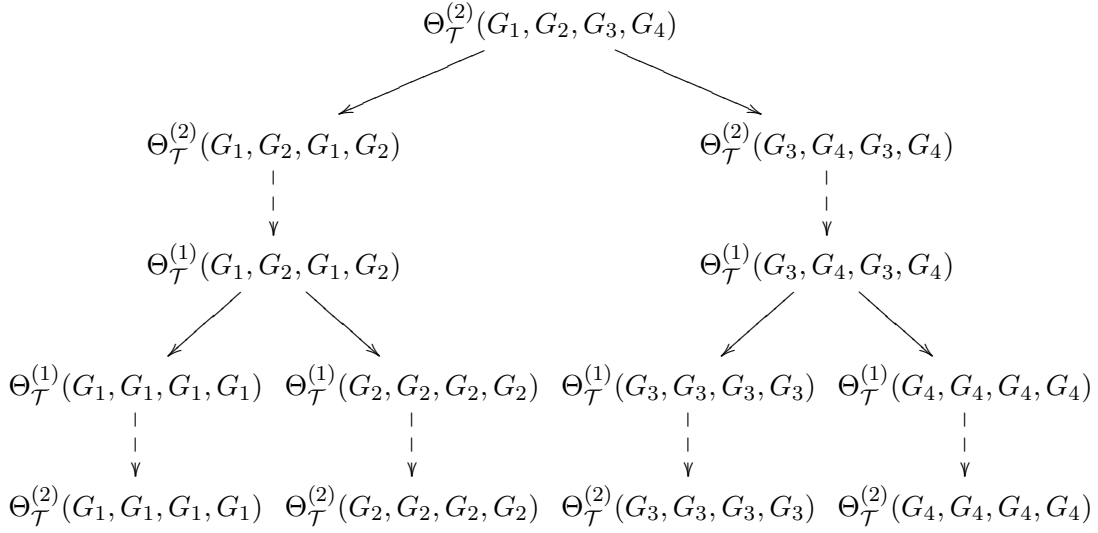
Reduction inequalities now also give
\begin{align}
\big|\Theta^{(1)}_{\mathcal{T}}(G_1,G_2,G_1,G_2)\big| & \leq 1 \,, \label{vkeqtreeproof1} \\
\big|\Theta^{(1)}_{\mathcal{T}}(G_3,G_4,G_3,G_4)\big| & \leq 1 \,. \nonumber
\end{align}
Next, from (\ref{vkeqnormalization}), (\ref{vkeqboxcsb}), and (\ref{vkeqxidef}) one gets
\begin{equation}
\label{vkeqtreeproof2}
\Xi_{\mathcal{L}(\mathcal{T})}(G_1,G_2,G_1,G_2) \leq 1 \,,
\end{equation}
while Lemma \ref{vklemmatelescoping} gives
$$ \Theta^{(2)}_{\mathcal{T}}(G_1,G_2,G_1,G_2) \leq
\Xi_{\mathcal{L}(\mathcal{T})}(G_1,G_2,G_1,G_2) - \Theta^{(1)}_{\mathcal{T}}(G_1,G_2,G_1,G_2) \,, $$
and combining with (\ref{vkeqtreeproof1}), (\ref{vkeqtreeproof2}) yields
$$ \Theta^{(2)}_{\mathcal{T}}(G_1,G_2,G_1,G_2) \leq 2 \,. $$
Completely analogously
$$ \Theta^{(2)}_{\mathcal{T}}(G_3,G_4,G_3,G_4) \leq 2 \,. $$
By another application of Lemma \ref{vklemmareduction}
we end up with
$$ \big|\Theta^{(2)}_{\mathcal{T}}(G_1,G_2,G_3,G_4)\big| \leq 2 \,, $$
and this establishes (\ref{vkeqsingletreetheta}).

For the proof of (\ref{vkeqsingletree}) one has to substitute
$F_1=\mathbf{1}$, $F_2=G$, $F_3=F$, $F_4=H$ into (\ref{vkeqsingletreetheta}).
\end{proof}

The above proof can be represented in the form of a tree diagram, as in Figure \ref{vkdiagram}.
We were inductively bounding terms starting from the bottom and proceeding to the top.
The last row consists of nonnegative terms, allowing us to start the ``induction''.
By every application of the telescoping identity we also get terms with $\Xi_{\mathcal{L}(\mathcal{T})}$,
which we do not denote, and which are controlled by (\ref{vkeqnormalization}) and (\ref{vkeqboxcsb}).

\section{Proving the estimate in the local $\mathrm{L}^2$ case}
\label{vksectionsummingtrees}

In this section we show the bound
\begin{equation}
\label{vkeqcasepqr}
|\Lambda_{\mathrm{d}}(F,G,H)| \ \lesssim_{p,q,r} \|F\|_{\mathrm{L}^{p}} \|G\|_{\mathrm{L}^{q}} \|H\|_{\mathrm{L}^{r}}
\end{equation}
for \ $\frac{1}{p}+\frac{1}{q}+\frac{1}{r}=1$, \ $2<p,q,r<\infty$.
By duality we get (\ref{vkeqstrongtype}) for $T_\mathrm{d}$ in the range
\,$2<p,q<\infty$,\, $\frac{1}{p}+\frac{1}{q}>\frac{1}{2}$.\,
The following material became somewhat standard over the time, and indeed we are closely following the ideas from \cite{T1},
actually in a much simpler setting.

Let us fix dyadic step functions $F,G,H\colon\mathbb{R}^2\to[0,\infty)$, none of them being identically $0$.
To make all arguments finite, in this section we restrict ourselves to considering
only dyadic squares $Q$ satisfying $Q\subseteq[-2^N,2^N)^2$ and $2^{-2N}\leq |Q|\leq 2^{2N}$,
for some (large) fixed positive integer $N$.
Since our bounds will be independent of $N$, letting $N\to\infty$ handles the whole collection $\mathcal{C}$.

We organize the family of dyadic squares in the following way.
For any $k\in\mathbb{Z}$ we define the collection
$$ \mathcal{P}_{k}^{F}
:= \Big\{ Q \ : \ 2^{k} \leq \sup_{Q'\supseteq Q} \|F\|_{\Box(Q')} < 2^{k+1} \Big\} \,, $$
and let $\mathcal{M}_{k}^{F}$ denote the family of maximal squares in $\mathcal{P}_{k}^{F}$
with respect to the set inclusion.
Collections $\mathcal{P}_{k}^{G}$, $\mathcal{M}_{k}^{G}$, $\mathcal{P}_{k}^{H}$, $\mathcal{M}_{k}^{H}$
are defined analogously.
Furthermore, for any triple of integers $k_1,k_2,k_3$ we set
$$ \mathcal{P}_{k_1,k_2,k_3} := \mathcal{P}_{k_1}^{F}\cap\mathcal{P}_{k_2}^{G}\cap\mathcal{P}_{k_3}^{H} \,, $$
and let $\mathcal{M}_{k_1,k_2,k_3}$ denote the family of maximal squares in $\mathcal{P}_{k_1,k_2,k_3}$.

For each $Q\in\mathcal{M}_{k_1,k_2,k_3}$ note that
$$ \mathcal{T}_Q := \{ \widetilde{Q}\in \mathcal{P}_{k_1,k_2,k_3} \, : \, \widetilde{Q}\subseteq Q \} $$
is a convex tree with root $Q$, and that for different $Q$ the corresponding trees $\mathcal{T}_Q$
occupy disjoint regions in $\mathbb{R}^2$.
These trees decompose the collection $\mathcal{P}_{k_1,k_2,k_3}$, for each individual choice of $k_1,k_2,k_3$.

We apply Proposition \ref{vkpropsingletree} to each of the trees $\mathcal{T}_Q$.
Consider any leaf $\widetilde{Q}\in\mathcal{L}(\mathcal{T}_Q)$, and denote its parent by $Q'$.
From $Q'\in\mathcal{T}_Q\subseteq\mathcal{P}_{k_1,k_2,k_3}$ we get
$$ {\textstyle\frac{1}{2}}\|F\|_{\Box(\widetilde{Q})} \leq \|F\|_{\Box(Q')} < 2^{k_1+1} \,, $$
thus $\|F\|_{\Box(\widetilde{Q})} \lesssim 2^{k_1}$,\,
and similarly $\|G\|_{\Box(\widetilde{Q})} \lesssim 2^{k_2}$, $\|H\|_{\Box(\widetilde{Q})} \lesssim 2^{k_3}$,
so the ``single tree estimate'' (\ref{vkeqsingletree}) implies
$$ \big|\Lambda_{\mathcal{T}_Q}(F,G,H)\big| \ \lesssim \
2^{k_1+k_2+k_3} \,|Q| \,. $$

\medskip
We split $\Lambda_\mathrm{d}$ into a sum of $\Lambda_{\mathcal{T}_Q}$
over all $k_1,k_2,k_3\in\mathbb{Z}$ and all $Q\in\mathcal{M}_{k_1,k_2,k_3}$.
In order to finish the proof of (\ref{vkeqcasepqr}), it remains to show
\begin{equation}
\label{vkeqsumtrees1}
\sum_{k_1,k_2,k_3\in\mathbb{Z}} 2^{k_1+k_2+k_3}
\sum_{Q\in\mathcal{M}_{k_1,k_2,k_3}} |Q| \ \lesssim_{p,q,r}
\|F\|_{\mathrm{L}^{p}}\|G\|_{\mathrm{L}^{q}}\|H\|_{\mathrm{L}^{r}} \,.
\end{equation}
The trick from \cite{T1} is to observe that for any fixed triple $k_1,k_2,k_3\in \mathbb{Z}$,
squares in $\mathcal{M}_{k_1}^{F}$ cover squares in $\mathcal{M}_{k_1,k_2,k_3}$,
and the latter are disjoint.
The same is true for $\mathcal{M}_{k_2}^{G}$ and $\mathcal{M}_{k_3}^{H}$.
Thus, it suffices to prove
\begin{align}
\sum_{k_1,k_2,k_3\in\mathbb{Z}}\!\! 2^{k_1+k_2+k_3}
\min \Big( \sum_{Q\in\mathcal{M}_{k_1}^{F}} \!\!\!|Q|, \sum_{Q\in\mathcal{M}_{k_2}^{G}} \!\!\!|Q|,
\sum_{Q\in\mathcal{M}_{k_3}^{H}} \!\!\!|Q| \Big) \quad & \nonumber \\
\lesssim_{p,q,r} \|F\|_{\mathrm{L}^{p}}\|G\|_{\mathrm{L}^{q}}\|H\|_{\mathrm{L}^{r}} & \,. \label{vkeqsumtrees2}
\end{align}

\smallskip
Consider the following version of the dyadic maximal function
$$ \mathrm{M}_{2} F \,:=\, \sup_{Q\in\mathcal{C}}\,
\Big(\frac{1}{|Q|}\int_Q |F|^{2}\Big)^{1/2} \mathbf{1}_Q \,. $$
For each $Q\in\mathcal{M}_{k}^{F}$, from (\ref{vkeqboxlessl2}) and
$\|F\|_{\Box(Q)} \geq 2^{k}$ we have
$Q\subseteq \{\mathrm{M}_{2}F\geq 2^{k}\}$,
and by disjointness
$$ \sum_{Q\in\mathcal{M}_{k}^{F}} |Q| \,\leq\, |\{\mathrm{M}_{2} F\geq 2^k\}| \,. $$
Also note that
$$ \sum_{k\in\mathbb{Z}}\ 2^{p k} |\{\mathrm{M}_{2} F\geq 2^k\}|
\ \sim_p \|\mathrm{M}_{2} F\|_{\mathrm{L}^{p}}^{p}
\, \lesssim_{p} \|F\|_{\mathrm{L}^{p}}^{p} \,, $$
because $\mathrm{M}_2$ is bounded on $\mathrm{L}^p(\mathbb{R}^2)$ for $2<p<\infty$.
Therefore
\begin{equation}
\label{vkeqsumtrees3}
\sum_{k\in\mathbb{Z}}\, 2^{p k}\!\! \sum_{Q\in\mathcal{M}_{k}^{F}}\!\! |Q|
\ \lesssim_{p} \, \|F\|_{\mathrm{L}^{p}}^{p} \,,
\end{equation}
and completely analogously we get
$$ \sum_{k\in\mathbb{Z}} 2^{q k}\!\! \sum_{Q\in\mathcal{M}_{k}^{G}}\!\! |Q|
\ \lesssim_{q} \|G\|_{\mathrm{L}^{q}}^{q}\,, \quad
\sum_{k\in\mathbb{Z}} 2^{r k}\!\! \sum_{Q\in\mathcal{M}_{k}^{H}}\!\! |Q|
\ \lesssim_{r} \|H\|_{\mathrm{L}^{r}}^{r} \,. $$
A purely algebraic ``integration lemma'' stated and proved in \cite{T1} deduces
(\ref{vkeqsumtrees2}) from these three estimates.
The idea is to split the sum in (\ref{vkeqsumtrees2})
into three parts, depending on which of the numbers
$$ \frac{2^{p k_1}}{\|F\|_{\mathrm{L}^p}^{p}}, \
\frac{2^{q k_2}}{\|G\|_{\mathrm{L}^q}^{q}}, \
\frac{2^{r k_3}}{\|H\|_{\mathrm{L}^r}^{r}} $$
is the largest. For instance, the part of the sum over
$$ {\textstyle S_1 := \{(k_1,k_2,k_3)\in\mathbb{Z}^3:\,
\frac{2^{p k_1}}{\|F\|_{\mathrm{L}^p}^{p}}\geq\frac{2^{q k_2}}{\|G\|_{\mathrm{L}^q}^{q}},\,
\frac{2^{p k_1}}{\|F\|_{\mathrm{L}^p}^{p}}\geq\frac{2^{r k_3}}{\|H\|_{\mathrm{L}^r}^{r}}\}} $$
is controlled as
$$ \sum_{k_1\in\mathbb{Z}}\
\frac{2^{p k_1}}{\|F\|_{\mathrm{L}^p}^{p}}\Big(\!\sum_{Q\in\mathcal{M}_{k_1}^{F}} \!\!\!|Q|\Big)
\!\!\!\!\!\!\!\!
\sum_{{\scriptsize\begin{array}{c}k_2,k_3\in\mathbb{Z}\\(k_1,k_2,k_3)\in S_1\end{array}}}
\!\!\!\!\!\!\!\!
\bigg(\frac{{2^{q k_2}}/{\|G\|_{\mathrm{L}^q}^{q}}}{{2^{p k_1}}/{\|F\|_{\mathrm{L}^p}^{p}}}\bigg)^{\frac{1}{q}}
\bigg(\frac{{2^{r k_3}}/{\|H\|_{\mathrm{L}^r}^{r}}}{{2^{p k_1}}/{\|F\|_{\mathrm{L}^p}^{p}}}\bigg)^{\frac{1}{r}}
\lesssim_{p,q,r} 1 \,, $$
which follows from (\ref{vkeqsumtrees3}) and by summing two convergent geometric series
with their largest terms at most $1$, and ratios equal to $\frac{1}{2}$.

\section{Extending the range of exponents}
\label{vksectionextrange}

The extension of the main estimate to the range $p\leq 2$ or $q\leq 2$
follows from the conditional result of Bernicot, \cite{B}.
Here we repeat his argument in the dyadic case, where it is a bit simpler.
His idea is to use one-dimensional Calder\'{o}n-Zygmund decomposition in each fiber
$F(\cdot,y)$ or $G(x,\cdot)$.

We start with an estimate obtained in the previous section:
\begin{equation}
\label{vkeqextend0}
\|T_{\mathrm{d}}(F,G)\|_{\mathrm{L}^{pq/(p+q),\infty}} \leq\,\|T_{\mathrm{d}}(F,G)\|_{\mathrm{L}^{pq/(p+q)}}
\lesssim_{p,q} \|F\|_{\mathrm{L}^{p}} \|G\|_{\mathrm{L}^{q}} \,,
\end{equation}
for some $2<p,q<\infty$, $\frac{1}{p}+\frac{1}{q}>\frac{1}{2}$.
If we prove the weak estimate
\begin{align}
& \|T_{\mathrm{d}}(F,G)\|_{\mathrm{L}^{p/(p+1),\infty}} \lesssim_{p,q}
\|F\|_{\mathrm{L}^{p}} \|G\|_{\mathrm{L}^{1}} \,, \label{vkeqextend1}
\end{align}
then $T$ will be bounded in the whole range of Theorem \ref{vktheoremmainparaprod},
by real interpolation of multilinear operators, as stated for instance in \cite{GT} or \cite{T2}.
We first cover the part $p>2$, $q\leq 2$, then use (\ref{vkeqsymmetryintro}) for $p\leq 2$, $q>2$,
and finally repeat the argument to tackle the case $p,q\leq 2$.

By homogeneity we may assume \,$\|F\|_{\mathrm{L}^{p}}=\|G\|_{\mathrm{L}^{1}}=1$.
For each $x\in\mathbb{R}$ denote by $\mathcal{J}_x$
the collection of all maximal dyadic intervals $J$ with the property
$$ \frac{1}{|J|}\int_{J}|G(x,y)|\,dy > 1 \,. $$
Furthermore, set
$$ E:=\bigcup_{x\in\mathbb{R}}\bigcup_{J\in\mathcal{J}_x}(\{x\}\times J)  \,. $$
By our qualitative assumptions on $G$,
the set $E$ is simply a finite union of dyadic rectangles.
Using disjointness of $J\in\mathcal{J}_x$
\begin{equation}
\label{vkeqextend3}
|E| = \int_{\mathbb{R}} \sum_{J\in\mathcal{J}_x}|J| \,dx \leq
\int_{\mathbb{R}} \Big( \sum_{J\in\mathcal{J}_x}\int_{J}|G(x,y)| \,dy \Big) \,dx \leq 1 \,.
\end{equation}
Next, we define ``the good part'' of $G$ by
$$ \widetilde{G}(x,y) := \left\{\begin{array}{cl}
\frac{1}{|J|}\int_{J}G(x,v)dv, & \textrm{ for }y\in J\in\mathcal{J}_x \\
G(x,y), & \textrm{ for }(x,y)\not\in E
\end{array}\right. $$
By the construction of $\mathcal{J}_x$ we have
$\|\widetilde{G}\|_{\mathrm{L}^\infty}\leq 2$,
and from $\|\widetilde{G}\|_{\mathrm{L}^1}\leq 1$ we also get
$\|\widetilde{G}\|_{\mathrm{L}^q}\leq 2$,
so using the known estimate (\ref{vkeqextend0}) we obtain
\begin{equation}
\label{vkeqextend4}
\big|\big\{(x,y):|T_{\mathrm{d}}(F,\widetilde{G})(x,y)|>1\big\}\big| \ \lesssim_{p,q}\, 1 \,.
\end{equation}

As the last ingredient, we show that
\begin{equation}
\label{vkeqextend5}
\Big( \int_{\mathbb{R}}\big(G(x,v)\!-\!\widetilde{G}(x,v)\big) \,\psi^{\mathrm{d}}_{J'}(v) dv \Big) \,\psi^{\mathrm{d}}_{J'}(y) = 0
\end{equation}
for every $J'\in\mathcal{D}$, whenever $(x,y)\not\in E$.
Since $G(x,\cdot)-\widetilde{G}(x,\cdot)$ is supported on $\bigcup_{J\in\mathcal{J}_x}J$,
this in turn will follow from
\begin{equation}
\label{vkeqextend5ver2}
\Big( \int_{\mathbb{R}}\big(G(x,v)\!-\!\widetilde{G}(x,v)\big)
\,\psi^{\mathrm{d}}_{J'}(v) \,\mathbf{1}_{J}(v) \,dv \Big) \,\psi^{\mathrm{d}}_{J'}(y) = 0
\end{equation}
for every $J\in\mathcal{J}_x$.
In order to verify (\ref{vkeqextend5ver2}) it is enough to consider $J\cap J' \neq\emptyset$ and $y\in J'$,
and since $y\not\in J$, we conclude that $J$ is strictly contained in $J'$.
In this case $\psi^{\mathrm{d}}_{J'}(v) \,\mathbf{1}_{J}(v)=\pm |J'|^{-1/2}\mathbf{1}_{J}(v)$,
so we only have to observe
$\int_{J}\big(G(x,v)\!-\!\widetilde{G}(x,v)\big)dv=0$,
by the definition of $\widetilde{G}$.

Equation (\ref{vkeqextend5}) immediately gives $T_{\mathrm{d}}(F,G\!-\!\widetilde{G})(x,y)=0$ for $(x,y)\not\in E$, so
$$ \big\{(x,y):|T_{\mathrm{d}}(F,G)(x,y)|>1\big\} \subseteq E \cup
\big\{(x,y):|T_{\mathrm{d}}(F,\widetilde{G})(x,y)|>1\big\}, $$
and then from (\ref{vkeqextend3}) and (\ref{vkeqextend4})
$$ \big|\big\{(x,y):|T_{\mathrm{d}}(F,G)(x,y)|>1\big\}\big| \ \lesssim_{p,q}\, 1 \,. $$
This establishes (\ref{vkeqextend1}) by dyadic scaling.

\section{Transition to the continuous case}
\label{vksectionrealcase}

Now we turn to the task of proving strong estimates for $T_\mathrm{c}$ in the range from part (a)
of Theorem \ref{vktheoremmainparaprod}:
$$ \|T_{\mathrm{c}}(F,G)\|_{\mathrm{L}^{pq/(p+q)}} \lesssim_{p,q}
\|F\|_{\mathrm{L}^{p}} \|G\|_{\mathrm{L}^{q}} $$
for \,$1<p,q<\infty$,\, $\frac{1}{p}+\frac{1}{q}>\frac{1}{2}$.
In order to get the boundary weak estimates, one can later proceed as in \cite{B}.

Let $\varphi$ and $\psi$ be as in the introduction.
If $\int_{\mathbb{R}}\varphi=0$, then $T_\mathrm{c}(F,G)$ is dominated by
$$ \Big(\sum_{k\in\mathbb{Z}} |\mathrm{P}_{\varphi_k}^{(1)}F|^2 \Big)^{1/2}
\Big(\sum_{k\in\mathbb{Z}} |\mathrm{P}_{\psi_k}^{(2)}G|^2 \Big)^{1/2}, $$
and it is enough to use bounds for the two square functions.
Otherwise, we have
\,$0<|\int_{\mathbb{R}}\varphi| \lesssim 1$\,,
so let us normalize $\int_{\mathbb{R}}\varphi =1$.

A tool that comes in very handy here is the square function of Jones, Seeger, and Wright \cite{JSW}.
It effectively compares convolutions to martingale averages,
allowing us to do the transition easily.
\begin{proposition}[from \cite{JSW}]
\label{vkpropjsw}
Let $\varphi$ be a function satisfying \emph{(\ref{vkeqsymbolbounds})} and $\int_{\mathbb{R}}\varphi =1$.
The square function
$$ \mathcal{S}_{\mathrm{JSW},\varphi}f := \Big(\sum_{k\in\mathbb{Z}}
\big|\mathrm{P}_{\varphi_k}f - \mathbb{E}_{k}f\big|^2 \Big)^{1/2} $$
is bounded from $\mathrm{L}^p(\mathbb{R})$ to $\mathrm{L}^p(\mathbb{R})$ for $1<p<\infty$,
with the constant depending only\linebreak on $p$.
\end{proposition}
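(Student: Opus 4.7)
The plan is to establish the $\mathrm{L}^p$ estimate for $1<p<\infty$ by first proving the $\mathrm{L}^2$ bound and then invoking a vector-valued Calder\'on--Zygmund argument for the operator $T\colon f\mapsto (\mathrm{P}_{\varphi_k}f-\mathbb{E}_k f)_{k\in\mathbb{Z}}$, viewed as a mapping from scalar functions into $\ell^2(\mathbb{Z})$-valued functions.

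For the $\mathrm{L}^2$ bound, I would decompose $f=\sum_j \Delta_j f$ into Haar components, which are mutually orthogonal in $\mathrm{L}^2$. Since $\mathbb{E}_k\Delta_j f=\Delta_j f$ for $j<k$ and $\mathbb{E}_k\Delta_j f=0$ for $j\geq k$, one has
\[
(\mathrm{P}_{\varphi_k}-\mathbb{E}_k)\Delta_j f=
\begin{cases}
(\mathrm{P}_{\varphi_k}-I)\Delta_j f,& j<k,\\[1mm]
\mathrm{P}_{\varphi_k}\Delta_j f,& j\geq k.
\end{cases}
\]
When $j<k$, the normalization $\int\varphi=1$ together with the pointwise decay of $\varphi$ and $\varphi'$ from (\ref{vkeqsymbolbounds}) yields $\|(\mathrm{P}_{\varphi_k}-I)\Delta_j f\|_{\mathrm{L}^2}\lesssim 2^{-(k-j)}\|\Delta_j f\|_{\mathrm{L}^2}$, since the Haar block $\Delta_j f$ is constant on dyadic intervals much coarser than the scale $2^{-k}$ of $\varphi_k$ and only a thin boundary layer contributes. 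When $j\geq k$, the cancellation of $\Delta_j f$ on every dyadic interval of length $2^{-k}$ combined with the smoothness of $\varphi_k$ gives the symmetric decay $\|\mathrm{P}_{\varphi_k}\Delta_j f\|_{\mathrm{L}^2}\lesssim 2^{-(j-k)}\|\Delta_j f\|_{\mathrm{L}^2}$. Applying the triangle inequality in $j$, Young's inequality on $\mathbb{Z}$, and the orthogonality of the Haar decomposition,
\[
\sum_{k\in\mathbb{Z}}\|\mathrm{P}_{\varphi_k}f-\mathbb{E}_k f\|_{\mathrm{L}^2}^2 \lesssim \sum_{j\in\mathbb{Z}}\|\Delta_j f\|_{\mathrm{L}^2}^2 \sim \|f\|_{\mathrm{L}^2}^2.
\]

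To extend to $\mathrm{L}^p$ for $1<p<\infty$, I would view $T$ as an $\ell^2(\mathbb{Z})$-valued singular integral with kernel
\[
K(x,y):=\big(\varphi_k(x-y)-2^k\mathbf{1}_{I_k(x)}(y)\big)_{k\in\mathbb{Z}},
\]
where $I_k(x)$ denotes the dyadic interval of length $2^{-k}$ containing $x$. The principal obstacle is the failure of translation invariance for $\mathbb{E}_k$: the H\"ormander condition for $\|K(x,y)-K(x',y)\|_{\ell^2}$ must be verified by hand, splitting the $k$-sum according to whether $2^{-k}$ is small or large compared to $|x-x'|$. In the regime $2^{-k}\ll|x-x'|$ the convolution pieces are controlled by the pointwise decay of $\varphi_k$, and only scales with $2^{-k}\sim|x-x'|$ can produce non-matching dyadic indicators, which sum to an admissible contribution. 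In the regime $2^{-k}\gg|x-x'|$ the indicators coincide, and one applies the modulus-of-continuity bound on $\varphi_k$ coming from (\ref{vkeqsymbolbounds}). With the H\"ormander condition in place, the vector-valued Calder\'on--Zygmund theorem provides the weak $(1,1)$ endpoint, and interpolation with the $\mathrm{L}^2$ bound gives the full range of $\mathrm{L}^p$ estimates.
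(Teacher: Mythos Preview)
The paper does not prove this proposition at all: it is simply quoted from \cite{JSW} and used as a black box. So there is no ``paper's proof'' to compare against, and your sketch must stand on its own.

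Your $\mathrm{L}^2$ argument is essentially sound. The claimed decay rates $2^{-|k-j|}$ are slightly optimistic (a careful count for the regime $j<k$ gives only $2^{-(k-j)/2}$, because the error near each Haar jump carries $\mathrm{L}^2$ mass $\sim 2^{-k/2}$ and there are $\sim 2^{j}$ jumps per unit length), but any geometric decay suffices for the Schur/Young step, so this is cosmetic.

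There is, however, a genuine gap in your H\"ormander verification. The assertion ``in the regime $2^{-k}\gg|x-x'|$ the indicators coincide'' is false. If $x$ and $x'$ lie on opposite sides of a dyadic endpoint of high order (the archetype being $x<0<x'$), then $I_k(x)\neq I_k(x')$ for \emph{every} $k\in\mathbb{Z}$, not merely for $2^{-k}\sim|x-x'|$. For such a pair one computes, for $y$ in the H\"ormander region,
\[
\|K(x,y)-K(x',y)\|_{\ell^2}^2 \;\gtrsim\; \sum_{2^{-k}>|y-x|} 4^{k} \;\sim\; |y-x|^{-2},
\]
so the H\"ormander integral diverges logarithmically. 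The smooth convolution part cannot rescue this: on those scales $|\varphi_k(x-y)-\varphi_k(x'-y)|\lesssim 2^{2k}|x-x'|\ll 2^k$, so there is no cancellation against the dyadic jump.

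The fix is not to aim for the pointwise H\"ormander condition but to run the Calder\'on--Zygmund decomposition using \emph{dyadic} stopping intervals $I_j$. Then each bad piece $b_j$ is supported on a single dyadic interval with mean zero, and one checks directly that $\mathbb{E}_k b_j$ is supported in $I_j$ for every $k$ (it vanishes for $2^{-k}\ge|I_j|$ by cancellation, and is trapped in $I_j$ for $2^{-k}<|I_j|$). Hence outside $\bigcup_j I_j$ only the smooth convolution part of $Tb_j$ survives, and that part \emph{does} satisfy the standard vector-valued H\"ormander estimate. This yields the weak $(1,1)$ bound. A minor additional remark: weak $(1,1)$ plus $\mathrm{L}^2$ gives only $1<p\le 2$; for $p>2$ you should pass through duality (e.g.\ randomize the sum via Khintchine and observe that the adjoint operator has the same structure with $\varphi$ replaced by $\check{\varphi}$).
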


Let $\phi$ be a nonnegative $\mathrm{C}^\infty$ function such that
$\hat{\phi}(\xi)=1$ for $|\xi|\leq 2^{-0.6}$,
and $\hat{\phi}(\xi)=0$ for $|\xi|\geq 2^{-0.4}$.
We regard it as fixed, so we do not keep track of dependence of constants on $\phi$.
For any $a\in\mathbb{R}$ define $\phi_a$, $\vartheta_a$, $\rho_a$ by
\begin{align*}
\hat{\phi}_a(\xi) & :=\hat{\phi}(2^{-a}\xi) \,, \\
\hat{\vartheta}_a(\xi) & := \hat{\phi}(2^{-a-1}\xi)-\hat{\phi}(2^{-a}\xi)
\,=\, \hat{\phi}_{a+1}(\xi) - \hat{\phi}_a(\xi)  \,, \\
\hat{\rho}_a(\xi) & := \hat{\phi}(2^{-a-0.6}\xi)-\hat{\phi}(2^{-a-0.5}\xi) \,,
\end{align*}
so that in particular
\begin{align}
\hat{\vartheta}_{a}=1 & \quad\textrm{on }\mathrm{supp}(\hat{\rho}_{a}) \,, \label{vkeqspecialsupports1} \\
{\textstyle\sum_{i=-20}^{20}}\,\hat{\rho}_{k+0.1i}=1 & \quad\textrm{on }\mathrm{supp}(\hat{\psi}_{k}) \,,
\label{vkeqspecialsupports2} \\
{\textstyle\sum_{i=-20}^{20}}\,\hat{\rho}_{k+0.1i}=0 & \quad\textrm{on }\mathrm{supp}(\hat{\psi}_{k'})
\ \textrm{ if }|k'-k|\geq 10 \,. \label{vkeqspecialsupports3}
\end{align}

\smallskip
We first use Proposition \ref{vkpropjsw} to obtain bounds for a special case of
our continuous twisted paraproduct:
\begin{equation}
\label{vkeqtwspecialcase}
T_{\varphi,\vartheta,b}(F,G) :=
\sum_{k\in\mathbb{Z}} (\mathrm{P}_{\varphi_k}^{(1)}F) (\mathrm{P}_{\vartheta_{k+b}}^{(2)}G) \,,
\end{equation}
where $b\in\mathbb{R}$ is a fixed parameter. The constants can depend on $b$,
as later $b$ will take only finitely many concrete values.
Since we have already established estimates for (\ref{vkeqparaproductdyadic}),
it is enough to bound their difference:
\begin{equation}
\label{vkeqdiffestimate}
\big\| T_{\varphi,\vartheta,b}(F,G) - T_{\mathrm{d}}(F,G) \big\|_{\mathrm{L}^{pq/(p+q)}} \lesssim_{p,q,b}
\|F\|_{\mathrm{L}^{p}} \|G\|_{\mathrm{L}^{q}} \,.
\end{equation}
We introduce a mixed-type operator
$$ T_{\mathrm{aux},b}(F,G) := \sum_{k\in\mathbb{Z}}\, (\mathbb{E}_{k}^{(1)}F) (\mathrm{P}_{\vartheta_{k+b}}^{(2)}G) \,. $$
Using the Cauchy-Schwarz inequality in $k\in\mathbb{Z}$, one gets
$$ \big| T_{\varphi,\vartheta,b}(F,G) - T_{\mathrm{aux},b}(F,G)\big| \,\leq\,
\Big( \sum_{k\in\mathbb{Z}} \big|\mathrm{P}_{\varphi_k}^{(1)}F - \mathbb{E}_{k}^{(1)}F\big|^2 \Big)^{1/2}
\Big( \sum_{k\in\mathbb{Z}} \big|\mathrm{P}_{\vartheta_{k+b}}^{(2)}G\big|^2 \Big)^{1/2} \,. $$
The first term on the right hand side is $\mathcal{S}_{\mathrm{JSW},\varphi}^{(1)}F$,
while the second one is the ordinary square function in the second variable, as $\int_\mathbb{R}\vartheta_b=0$.
Next, one can rewrite $T_{\mathrm{aux},b}$ and $T_{\mathrm{d}}$ as
\begin{align*}
T_{\mathrm{aux},b}(F,G) & = FG - \sum_{k\in\mathbb{Z}}\, (\Delta_{k}^{(1)}F) (\mathrm{P}_{\phi_{k+1+b}}^{(2)}G) \,, \\
T_{\mathrm{d}}(F,G) & = FG - \sum_{k\in\mathbb{Z}}\, (\Delta_{k}^{(1)}F) (\mathbb{E}_{k+1}^{(2)}G) \,.
\end{align*}
Subtracting and using the Cauchy-Schwarz inequality in $k\in\mathbb{Z}$, this time we obtain
$$ \big|T_{\mathrm{aux},b}(F,G) - T_{\mathrm{d}}(F,G)\big| \,\leq\,
\Big( \sum_{k\in\mathbb{Z}} \big|\Delta_{k}^{(1)}F\big|^2 \Big)^{1/2}
\Big( \sum_{k\in\mathbb{Z}} \big|\mathrm{P}_{\phi_{k+b}}^{(2)}G - \mathbb{E}_{k}^{(2)}G\big|^2 \Big)^{1/2} \,. $$
The first term on the right hand side is just the dyadic square function in the first variable,
while the second term is $\mathcal{S}_{\mathrm{JSW},\phi_b}^{(2)}G$.
The estimate (\ref{vkeqdiffestimate}) now follows from Proposition \ref{vkpropjsw}
and bounds on the two common square functions.

\smallskip
Actually, we need a ``sparser'' paraproduct than the one in (\ref{vkeqtwspecialcase}):
\begin{equation}
\label{vkeqtwspecialsparse}
T_{\varphi,\rho,b,l}^{10\mathbb{Z}}(F,G) :=
\sum_{j\in\mathbb{Z}} (\mathrm{P}_{\varphi_{10j+l}}^{(1)}F) (\mathrm{P}_{\rho_{10j+l+b}}^{(2)}G) \,,
\end{equation}
for any $l=0,1,\ldots,9$.
To see that (\ref{vkeqtwspecialsparse}) is bounded too,
we define
$$ \widetilde{G}_{b,l} := \sum_{j\in\mathbb{Z}} \mathrm{P}_{\rho_{10j+l+b}}^{(2)}G \,. $$
Notice that because of (\ref{vkeqspecialsupports1}) we have
$$ \mathrm{P}_{\vartheta_{k+b}}^{(2)}\widetilde{G}_{b,l} =
\left\{\begin{array}{cl}\mathrm{P}_{\rho_{10j+l+b}}^{(2)}G & \textrm{ for }k=10j+l\in 10\mathbb{Z}+l \\
0 & \textrm{ for }k\in\mathbb{Z},\ k\not\in 10\mathbb{Z}+l \end{array}\right. $$
and the Littlewood-Paley inequality gives
$$ \|\widetilde{G}_{b,l}\|_{\mathrm{L}^q} \lesssim_{q,b,l} \|G\|_{\mathrm{L}^q} \,. $$
It remains to write
$$ T_{\varphi,\rho,b,l}^{10\mathbb{Z}}(F,G) = T_{\varphi,\vartheta,b}(F,\widetilde{G}_{b,l}) \,, $$
and use boundedness of (\ref{vkeqtwspecialcase}).

\smallskip
Finally, we tackle the original operator (\ref{vkeqparaproductreal}).
The following computation is possible
because of (\ref{vkeqspecialsupports2}) and (\ref{vkeqspecialsupports3}).
\begin{align*}
\sum_{k\in\mathbb{Z}} \hat{\varphi}_{k}(\xi) \hat{\psi}_{k}(\eta)
& = \sum_{l=0}^{9} \sum_{j\in\mathbb{Z}} \hat{\varphi}_{10j+l}(\xi) \hat{\psi}_{10j+l}(\eta) \\
& = \sum_{l=0}^{9} \sum_{i=-20}^{20} \sum_{j\in\mathbb{Z}}
\hat{\varphi}_{10j+l}(\xi) \hat{\rho}_{10j+l+0.1i}(\eta) \hat{\psi}_{10j+l}(\eta) \\
& = \sum_{l=0}^{9} \sum_{i=-20}^{20} \sum_{j\in\mathbb{Z}}
\hat{\varphi}_{10j+l}(\xi) \hat{\rho}_{10j+l+0.1i}(\eta) \hat{\Psi}_{l}(\eta)
\end{align*}
Above we have set \,$\Psi_l := \sum_{m\in\mathbb{Z}} \psi_{10m+l}$.
This ``symbol identity'' leads us to
\begin{equation}
\label{vkeqcontfinal}
T_{\mathrm{c}}(F,G) = \sum_{l=0}^{9} \sum_{i=-20}^{20}
T_{\varphi,\,\rho,\,0.1i,\,l}^{10\mathbb{Z}}(F,\mathrm{P}_{\Psi_l}^{(2)}G) \,. \\
\end{equation}
Since $\hat{\psi}$ has a compact support and
$|\hat{\psi}(\eta)|,\,|\frac{d}{d\eta}\hat{\psi}(\eta)|\lesssim 1$
by (\ref{vkeqsymbolbounds}),
scaling gives
\,$|\hat{\Psi}_l(\eta)|\lesssim 1$,\, $\big|\frac{d}{d\eta}\hat{\Psi}_l(\eta)\big|\lesssim |\eta|^{-1}$,\,
and thus the H\"{o}rmander-Mikhlin multiplier theorem (in one variable) implies
$$ \big\|\mathrm{P}_{\Psi_{l}}^{(2)} G\big\|_{\mathrm{L}^q} \lesssim_{q,l} \|G\|_{\mathrm{L}^{q}} \,. $$
It remains to use (\ref{vkeqcontfinal}) and boundedness of (\ref{vkeqtwspecialsparse}).

\section{Endpoint counterexamples}
\label{vksectioncounterex}

We give the arguments in the dyadic setting, the continuous case being similar.
First we show that $T_\mathrm{d}$ does not map boundedly
$$ \mathrm{L}^\infty(\mathbb{R}^2)\times\mathrm{L}^q(\mathbb{R}^2) \to \mathrm{L}^{q,\infty}(\mathbb{R}^2) $$
for $1\leq q<\infty$.
Take $G$ to be
$$ G(x,y) := \mathbf{1}_{[0,2^{-n})}(x) \sum_{k=0}^{n-1} R_{k+1}(y) \,, $$
for some positive integer $n$, where $R_k$ denotes the $k$-th Rademacher function\footnote{
Linear combinations of Rademacher functions $\sum_k c_k R_k(t)$
are dyadic analogues of lacunary trigonometric series $\sum_k c_k e^{i 2^k t}$.}
on $[0,1)$, \ i.e.
$$ R_k := \!\sum_{J\subseteq[0,1),\, |J|=2^{-k+1}} \!(\mathbf{1}_{J_\mathrm{left}}-\mathbf{1}_{J_\mathrm{right}}) \,. $$
Recall Khintchine's inequality, which can be formulated as:
$$ \Big\|\sum_{k=1}^{n} c_k R_k\Big\|_{\mathrm{L}^q} \sim_q
\Big(\sum_{k=1}^{n}|c_k|^2\Big)^{1/2},\qquad\textrm{for } 0<q<\infty \,, $$
giving us \,$\|G\|_{\mathrm{L}^q} \sim_q\, 2^{-n/q}n^{1/2}$.\,
Observe that \,$(\Delta_{k}^{(2)}G)(x,y)=\mathbf{1}_{[0,2^{-n})}(x)R_{k+1}(y)$\,
for $k=0,1,\ldots,n-1$.

We choose $F$ supported in the unit square $[0,1)^2$ and defined by
$$ F(x,y) := \left\{\begin{array}{cl}
2R_{j}(y) - R_{j+1}(y), & \textrm{for }x\in[2^{-j},2^{-j+1}),\ j=1,\ldots,n\!-\!1 \\
R_{n}(y), & \textrm{for }x\in[0,2^{-n+1})
\end{array}\right. $$
Note that \,$\|F\|_{\mathrm{L}^\infty}\leq 3$\, and
\,$(\mathbb{E}_{k}^{(1)}F)(x,y)=R_{k+1}(y)$\, for $x\in[0,2^{-n})$, $k=0,1,\ldots,n-1$.
Since the output function is now simply
$T_{\mathrm{d}}(F,G) = n\,\mathbf{1}_{[0,2^{-n})\times[0,1)}$, we have
$$ \frac{\|T_{\mathrm{d}}(F,G)\|_{\mathrm{L}^{q,\infty}}}{\|F\|_{\mathrm{L}^\infty} \|G\|_{\mathrm{L}^q}}
\,\gtrsim_q \frac{2^{-n/q}n}{2^{-n/q}n^{1/2}} = n^{1/2} \,, $$
which shows unboundedness.

\medskip
The remaining estimate
\,$\|T_{\mathrm{d}}(F,G)\|_{\mathrm{L}^{\infty}} \lesssim \|F\|_{\mathrm{L}^{\infty}} \|G\|_{\mathrm{L}^{\infty}}$\,
is even easier to disprove.
For a positive integer $n$, take
$$ F(x,y) := \left\{\begin{array}{cl}
1, & \textrm{for }\,x\in\bigcup_{j=0}^{n-1}[2^{-2j-1}\!,2^{-2j}), \ y\in[0,1) \\
0, & \textrm{otherwise}
\end{array}\right. $$
and $G(x,y):=F(y,x)$. It is easy to see that
\,$|T_{\mathrm{d}}(F,G)(x,y)| \sim n$\, on the square $(x,y)\in[0,2^{-2n})^2$.

\section{Closing remarks}
\label{vksectionclosing}

The decomposition into trees from Section \ref{vksectionsummingtrees}
has its primary purpose in proving the estimate for a larger range of exponents.
If one is content with just having estimates in some nontrivial range,
then a simpler proof can be given.
Using Lemma \ref{vklemmareduction}:
\begin{align}
|\Lambda_{\mathrm{d}}(F,G,H)| & \,\leq\, \Theta_{\mathcal{C}}^{(2)}(\mathbf{1},G,\mathbf{1},G)^{1/2}
\,\Theta_{\mathcal{C}}^{(2)}(F,H,F,H)^{1/2} \,, \label{vkeqclosingsimple1} \\
\big|\Theta_{\mathcal{C}}^{(1)}(F,H,F,H)\big| & \,\leq\,
\Theta_{\mathcal{C}}^{(1)}(F,F,F,F)^{1/2} \,\Theta_{\mathcal{C}}^{(1)}(H,H,H,H)^{1/2} \,.
\end{align}
If in Lemma \ref{vklemmatelescoping} one lets a single tree $\mathcal{T}$ exhaust the family
of all dyadic squares, then the telescoping identity becomes simply
$$ \Theta_{\mathcal{C}}^{(1)}(F_1,F_2,F_3,F_4) + \Theta_{\mathcal{C}}^{(2)}(F_1,F_2,F_3,F_4)
= \int_{\mathbb{R}^2}\! F_1 F_2 F_3 F_4 \,. $$
Particular instances of this equality are:
\begin{align}
\Theta_{\mathcal{C}}^{(2)}(F,H,F,H) & = \|FH\|_{\mathrm{L}^2}^2 - \Theta_{\mathcal{C}}^{(1)}(F,H,F,H) \,, \\
\Theta_{\mathcal{C}}^{(2)}(\mathbf{1},G,\mathbf{1},G) & = \|G\|_{\mathrm{L}^2}^2
- \Theta_{\mathcal{C}}^{(1)}(\mathbf{1},G,\mathbf{1},G) = \|G\|_{\mathrm{L}^2}^2 \,, \\
\Theta_{\mathcal{C}}^{(1)}(F,F,F,F) & = \|F\|_{\mathrm{L}^4}^4
- \Theta_{\mathcal{C}}^{(2)}(F,F,F,F) \leq \|F\|_{\mathrm{L}^4}^4 \,, \\
\Theta_{\mathcal{C}}^{(1)}(H,H,H,H) & = \|H\|_{\mathrm{L}^4}^4
- \Theta_{\mathcal{C}}^{(2)}(H,H,H,H) \leq \|H\|_{\mathrm{L}^4}^4 \,. \label{vkeqclosingsimple2}
\end{align}
Combining (\ref{vkeqclosingsimple1})--(\ref{vkeqclosingsimple2}) one ends up with
$$ |\Lambda_{\mathrm{d}}(F,G,H)| \leq \|G\|_{\mathrm{L}^2}
\Big( \|FH\|_{\mathrm{L}^2}^2 + \|F\|_{\mathrm{L}^4}^2 \|H\|_{\mathrm{L}^4}^2 \Big)^{1/2} , $$
which establishes the estimate for $(p,q,r)=(4,2,4)$.
By symmetry one also gets the point $(p,q,r)=(2,4,4)$, and then uses interpolation
and the method from Section \ref{vksectionextrange}.
However, that way we would leave out the larger part of the Banach triangle,
including the ``central'' point $(p,q,r)=(3,3,3)$.

\bigskip
Starting from the single tree estimate (\ref{vkeqsingletreetheta})
and adjusting the arguments from Section \ref{vksectionsummingtrees} in the obvious way,
we also obtain estimates for an even more ``entwined'' form:
\begin{align*}
\Theta_{\mathcal{C}}^{(2)}(F_1,F_2,F_3,F_4)
= \sum_{I\times J\in\mathcal{C}}
\int_{\mathbb{R}^4}\! F_1(u,v) F_2(x,v) F_3(u,y) F_4(x,y) \qquad & \\[-2mm]
\varphi^{\mathrm{d}}_I(u)\varphi^{\mathrm{d}}_I(x)\psi^{\mathrm{d}}_J(v)\psi^{\mathrm{d}}_J(y) \ du dv dx dy & \,.
\end{align*}
The bound we get is
$$ \big|\Theta_{\mathcal{C}}^{(2)}(F_1,F_2,F_3,F_4)\big| \ \lesssim_{p_1,p_2,p_3,p_4}\
\prod_{j=1}^{4}\|F_j\|_{\mathrm{L}^{p_j}}, $$
whenever \,$\frac{1}{p_1}\!+\!\frac{1}{p_2}\!+\!\frac{1}{p_3}\!+\!\frac{1}{p_4}=1$,\ \
$2<p_1,p_2,p_3,p_4<\infty$.\,
This time we do not know of any arguments from the Calder\'{o}n-Zygmund theory that could
help expand the range of exponents.

\bigskip
Let us conclude with several words on a straightforward generalization of the method
presented in Sections \ref{vksectiontelescoping} and \ref{vksectionsummingtrees}
to higher dimensions.
For notational simplicity we only state the result in $\mathbb{R}^3$.
\begin{theorem}
\label{vktheoremgeneral}
For any $S\subseteq \{0,1,\ldots,7\}$ we define a multilinear form $\Lambda_S$,
acting on $|S|$ functions $F_{j}\colon\mathbb{R}^3\to\mathbb{C}$ by
\begin{align*}
\Lambda_S \big((F_{j})_{j\in S}\big)
& := \sum_{Q}\,\int_{\mathbb{R}^{6}}
\prod_{j\in S} F_{j}\big(x_{1}^{j_1},x_{2}^{j_2},x_{3}^{j_3}\big)
\ \varphi^{\mathrm{d}}_{I_1}\!(x_1^{0})\varphi^{\mathrm{d}}_{I_1}\!(x_1^{1})  \\
& \varphi^{\mathrm{d}}_{I_2}\!(x_2^{0})\varphi^{\mathrm{d}}_{I_2}\!(x_2^{1})
\,\psi^{\mathrm{d}}_{I_3}\!(x_3^{0})\psi^{\mathrm{d}}_{I_3}\!(x_3^{1})
\, dx_1^{0}dx_1^{1}dx_2^{0}dx_2^{1}dx_3^{0}dx_3^{1}  \,,
\end{align*}
where \,$Q=I_1\times I_2\times I_3$\, is a dyadic cube, and \,$j=j_1+2j_2+4j_3$,\, $j_1,j_2,j_3\in\{0,1\}$.
Then $\Lambda_S$ satisfies the bound
$$ \big|\Lambda_S \big((F_{j})_{j\in S}\big)\big| \ \ \lesssim_{(p_{j})_{j\in S}} \
\prod_{j\in S} \|F_{j}\|_{\mathrm{L}^{p_j}(\mathbb{R}^3)} \,, $$
whenever the exponents $(p_j)_{j\in S}$ are such that
\,$\sum_{j\in S} \frac{1}{p_j} =1$,\, and \,$4<p_j<\infty$\,
for every \,$j\in S$.
\end{theorem}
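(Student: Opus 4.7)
The strategy is to mimic Sections \ref{vksectiontelescoping} and \ref{vksectionsummingtrees}, replacing the 4-linear Gowers box form on $\{0,1\}^2$ with its 8-linear 3D analog on $\{0,1\}^3$. For a dyadic cube $Q = I_1 \times I_2 \times I_3 \subset \mathbb{R}^3$, I introduce the three-dimensional box inner product
$$[F_0, \ldots, F_7]_{\Box(Q)} := \frac{1}{|Q|^2} \int \prod_{\epsilon \in \{0,1\}^3} F_{j(\epsilon)}\bigl(x_1^{\epsilon_1}, x_2^{\epsilon_2}, x_3^{\epsilon_3}\bigr) \prod_{i=1}^{3} dx_i^0 dx_i^1,$$
with $j(\epsilon) = \epsilon_1 + 2\epsilon_2 + 4\epsilon_3$, and the 3D box norm $\|F\|_{\Box(Q)} := [F,\ldots,F]_{\Box(Q)}^{1/8}$. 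Iterated Cauchy--Schwarz along each coordinate direction gives the Gowers box Cauchy--Schwarz inequality generalizing (\ref{vkeqboxcsb}), and the sharper average control $\|F\|_{\Box(Q)} \leq (|Q|^{-1}\int_Q |F|^4)^{1/4}$; it is precisely this $L^4$-average bound that is responsible for the range $4 < p_j < \infty$ in the statement.

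Next, for each coordinate $d \in \{1, 2, 3\}$ and each finite convex tree $\mathcal{T}$ of dyadic cubes, I define local 8-linear forms $\Theta^{(d)}_{\mathcal{T}}$ by summing over $Q \in \mathcal{T}$ and inserting wavelet factors $\psi^{\mathrm{d}}_{I_d}\psi^{\mathrm{d}}_{I_d}$ in the $d$-th coordinate and scaling factors $\varphi^{\mathrm{d}}\varphi^{\mathrm{d}}$ in the other two (with the technical split across children in the non-wavelet coordinates, as in $\Theta^{(1)}$ of Section \ref{vksectiontelescoping}). I also define $\Xi_{\mathcal{F}}$ analogously, with $\varphi^{\mathrm{d}}$ in all three coordinates. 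The form $\Lambda_{S}$ restricted to $\mathcal{T}$ is recovered, up to constants $\mathbf{1}$ inserted at indices $j \notin S$, as one of the $\Theta^{(d)}_{\mathcal{T}}$. Applying the pointwise wavelet identity $\varphi^{\mathrm{d}}_I\varphi^{\mathrm{d}}_I + \psi^{\mathrm{d}}_I\psi^{\mathrm{d}}_I = \varphi^{\mathrm{d}}_{I_{\mathrm{left}}}\varphi^{\mathrm{d}}_{I_{\mathrm{left}}} + \varphi^{\mathrm{d}}_{I_{\mathrm{right}}}\varphi^{\mathrm{d}}_{I_{\mathrm{right}}}$ separately in each of the three coordinates, the same one-cube calculation that proved Lemma \ref{vklemmatelescoping} yields the telescoping identity
$$\Theta^{(1)}_{\mathcal{T}} + \Theta^{(2)}_{\mathcal{T}} + \Theta^{(3)}_{\mathcal{T}} = \Xi_{\mathcal{L}(\mathcal{T})} - \Xi_{\{Q_{\mathcal{T}}\}}.$$
A Cauchy--Schwarz in the $d$-th coordinate pair furnishes the reduction inequality generalizing Lemma \ref{vklemmareduction}: $|\Theta^{(d)}(F_0, \ldots, F_7)|$ is dominated by the geometric mean of two forms $\Theta^{(d)}$ in which the $F_j$'s with common $\epsilon_d$ have been paired.

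The main obstacle is the single tree estimate. In 2D, Figure \ref{vkdiagram} required only four alternating steps; in 3D the inductive scheme is taller, and the challenge is to organize the peeling so that after finitely many alternations of telescoping (eliminating one $\Theta^{(d)}$ in favor of $\Xi_{\mathcal{L}(\mathcal{T})}$ and the other two $\Theta^{(d')}$'s) and Cauchy--Schwarz (collapsing eight indices into four, four into two, two into one) every surviving form is diagonal $\Theta^{(d)}(F, F, \ldots, F)$, hence nonnegative, and therefore dominated by $\Xi_{\mathcal{L}(\mathcal{T})}(F, \ldots, F) \leq |Q_{\mathcal{T}}|$ under the normalization $\max_{Q \in \mathcal{L}(\mathcal{T})} \|F_j\|_{\Box(Q)} = 1$. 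Working this scheme out carefully produces
$$\bigl|\Theta^{(d)}_{\mathcal{T}}(F_0, \ldots, F_7)\bigr| \ \lesssim \ |Q_{\mathcal{T}}| \prod_{j=0}^{7} \max_{Q \in \mathcal{L}(\mathcal{T})} \|F_j\|_{\Box(Q)},$$
from which the analogous bound for $\Lambda_{S}^{\mathcal{T}}$ follows by substituting $\mathbf{1}$ at the indices $j \notin S$.

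Finally, the tree decomposition and summation argument of Section \ref{vksectionsummingtrees} transfers essentially verbatim. For each $j \in S$, the level sets $\mathcal{P}_{k}^{F_j} := \{Q : 2^k \leq \sup_{Q' \supseteq Q} \|F_j\|_{\Box(Q')} < 2^{k+1}\}$ and their maximal cubes $\mathcal{M}_{k}^{F_j}$ partition $\mathcal{C}$ into convex trees; applying the single tree estimate on each tree and using the $L^4$-average control of $\|F_j\|_{\Box(Q)}$ together with the $L^p$-boundedness of the maximal function $\mathrm{M}_4$ for $p > 4$ gives
$$\sum_{k \in \mathbb{Z}} 2^{p_j k} \!\!\sum_{Q \in \mathcal{M}_{k}^{F_j}} |Q| \ \lesssim_{p_j} \|F_j\|_{\mathrm{L}^{p_j}}^{p_j}.$$
Thiele's integration lemma (applied as in the passage from (\ref{vkeqsumtrees3}) to (\ref{vkeqsumtrees2}), only with $|S|$ indices instead of three) combines these estimates under the scaling constraint $\sum_{j \in S} 1/p_j = 1$ to complete the proof.
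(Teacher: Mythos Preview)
Your proposal is correct and follows essentially the same approach as the paper's own sketch: the three-dimensional Gowers box norm with the $\mathrm{L}^4$-average control, the three-term telescoping identity $\Theta^{(1)}_{\mathcal{T}}+\Theta^{(2)}_{\mathcal{T}}+\Theta^{(3)}_{\mathcal{T}}=\Xi_{\mathcal{L}(\mathcal{T})}-\Xi_{\{Q_{\mathcal{T}}\}}$, the inductive single-tree estimate via alternating Cauchy--Schwarz and telescoping until only diagonal (nonnegative) terms remain, and finally the tree decomposition with $\mathrm{M}_4$ and Thiele's integration lemma. The only cosmetic point is that the ``split across children'' does not occur in all non-wavelet coordinates of every $\Theta^{(d)}$ but rather cascades (e.g.\ $\Theta^{(3)}$ has no split, $\Theta^{(2)}$ splits in coordinate~3, $\Theta^{(1)}$ splits in coordinates~2 and~3), exactly so that the three terms together telescope; this is consistent with the paper's explicit formula for $\Theta^{(1)}$ and with your reference to the 2D $\Theta^{(1)}$.
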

The result is nontrivial only when $|S|\geq 5$.
We sketch a proof of Theorem \ref{vktheoremgeneral}, which uses the same ingredients as before.

Dyadic cubes $Q=I_1\times I_2\times I_3\subseteq\mathbb{R}^3$
are again organized into families of trees. For each tree $\mathcal{T}$ we define
the three local forms $\Theta_{\mathcal{T}}^{(1)}$\!,
$\Theta_{\mathcal{T}}^{(2)}$\!, $\Theta_{\mathcal{T}}^{(3)}$. For instance
\begin{align*}
\Theta_{\mathcal{T}}^{(1)}(F_0,\ldots,F_7)
& := \sum_{Q\in\mathcal{T}}\,\int_{\mathbb{R}^{6}}
\prod_{j=0}^{7} F_{j}\big(x_{1}^{j_1},x_{2}^{j_2},x_{3}^{j_3}\big)
\!\!\sum_{\alpha,\beta\in\{\mathrm{left},\mathrm{right}\}}\!\!\!\!
\psi^{\mathrm{d}}_{I_1}\!(x_1^{0})\psi^{\mathrm{d}}_{I_1}\!(x_1^{1})  \\
& \varphi^{\mathrm{d}}_{I_{2,\alpha}}\!(x_2^{0})\varphi^{\mathrm{d}}_{I_{2,\alpha}}\!(x_2^{1})
\,\varphi^{\mathrm{d}}_{I_{3,\beta}}\!(x_3^{0})\varphi^{\mathrm{d}}_{I_{3,\beta}}\!(x_3^{1})
\ dx_1^{0}dx_1^{1}dx_2^{0}dx_2^{1}dx_3^{0}dx_3^{1} \,.
\end{align*}
The form $\Xi_\mathcal{F}$ is defined analogously, with $[\cdot]_{\Box(Q)}$
replaced by the three-dimensional Gowers box inner-product:
$$ \left[F_0,\ldots,F_7\right]_{\Box^3(Q)}:=
\mathbb{E}\,\Big( \prod_{j=0}^{7} F_{j}\big(x_{1}^{j_1},x_{2}^{j_2},x_{3}^{j_3}\big)
\ \Big| \ x_1^{0},x_1^{1}\!\in\! I_1,\, x_2^{0},x_2^{1}\!\in\! I_2,\, x_3^{0},x_3^{1}\!\in\! I_3 \Big), $$
in the probabilistic notation.
However, Inequality (\ref{vkeqboxlessl2}) has to be replaced with
$$ \|F\|_{\Box^3(Q)} \,\leq\, \Big(\frac{1}{|Q|}\int_Q |F|^{4} \Big)^{1/4} , $$
which is the reason why the range of exponents is severely restricted.

The telescoping identity now has three terms on the left hand side:
\begin{equation}
\label{vkeqgentele}
\Theta_{\mathcal{T}}^{(1)} + \Theta_{\mathcal{T}}^{(2)} + \Theta_{\mathcal{T}}^{(3)}
= \Xi_{\mathcal{L}(\mathcal{T})} - \Xi_{\{Q_\mathcal{T}\}} \,.
\end{equation}
The proof of the single tree estimate is inductive,
with alternating applications of Identity (\ref{vkeqgentele}) and the Cauchy-Schwarz inequality.
The telescoping identity reduces the problem of controlling a particular ``theta-term'',
$\Theta^{(i)}(F_{k_0},\ldots,F_{k_7})$, to bounding two other theta-terms.
If any of the latter ones is nonnegative, it can be ignored.
On the other hand, any term that is not nonnegative can be estimated, using an analogue of Lemma \ref{vklemmareduction},
by two nonnegative terms with smaller number of different functions involved.
The induction starts with theta-terms containing only one function, $\Theta^{(i)}(F_{k},\ldots,F_{k})$,
but these are obviously nonnegative.

Figure \ref{vkdiagram3d} presents these steps in the form of a tree-diagram.
We draw only essentially different branches, i.e.\@ omit the ones that can be treated by analogy.
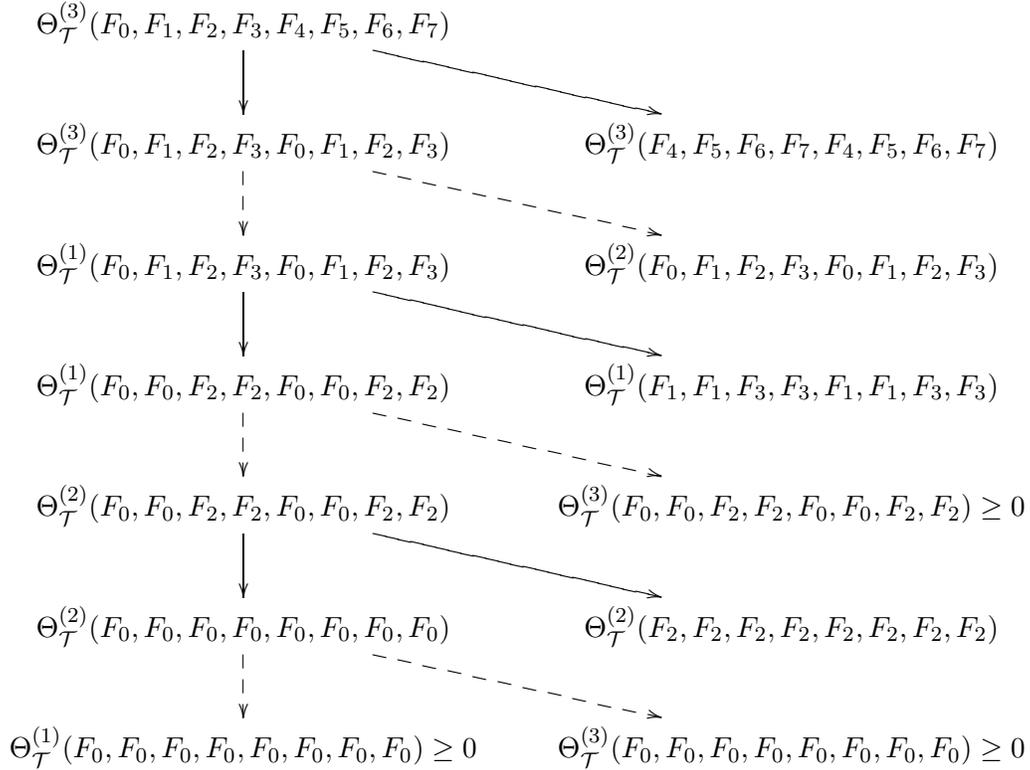
\begin{figure}[t]
{\small $$ \xymatrix {
\Theta_{\mathcal{T}}^{(3)}(F_0,F_1,F_2,F_3,F_4,F_5,F_6,F_7) \ar@{->}[d] \ar@{->}[dr] & \\
\Theta_{\mathcal{T}}^{(3)}(F_0,F_1,F_2,F_3,F_0,F_1,F_2,F_3) \ar@{-->}[d] \ar@{-->}[dr]
& \Theta_{\mathcal{T}}^{(3)}(F_4,F_5,F_6,F_7,F_4,F_5,F_6,F_7) \\
\Theta_{\mathcal{T}}^{(1)}(F_0,F_1,F_2,F_3,F_0,F_1,F_2,F_3) \ar@{->}[d] \ar@{->}[dr]
& \Theta_{\mathcal{T}}^{(2)}(F_0,F_1,F_2,F_3,F_0,F_1,F_2,F_3) \\
\Theta_{\mathcal{T}}^{(1)}(F_0,F_0,F_2,F_2,F_0,F_0,F_2,F_2) \ar@{-->}[d] \ar@{-->}[dr]
& \Theta_{\mathcal{T}}^{(1)}(F_1,F_1,F_3,F_3,F_1,F_1,F_3,F_3) \\
\Theta_{\mathcal{T}}^{(2)}(F_0,F_0,F_2,F_2,F_0,F_0,F_2,F_2) \ar@{->}[d] \ar@{->}[dr]
& \Theta_{\mathcal{T}}^{(3)}(F_0,F_0,F_2,F_2,F_0,F_0,F_2,F_2)\geq 0 \\
\Theta_{\mathcal{T}}^{(2)}(F_0,F_0,F_0,F_0,F_0,F_0,F_0,F_0) \ar@{-->}[d] \ar@{-->}[dr]
& \Theta_{\mathcal{T}}^{(2)}(F_2,F_2,F_2,F_2,F_2,F_2,F_2,F_2) \\
\Theta_{\mathcal{T}}^{(1)}(F_0,F_0,F_0,F_0,F_0,F_0,F_0,F_0)\geq 0
& \Theta_{\mathcal{T}}^{(3)}(F_0,F_0,F_0,F_0,F_0,F_0,F_0,F_0)\geq 0
} $$ }
\caption{The proof of the single tree estimate in $\mathbb{R}^3$.
A solid arrow denotes an application of the Cauchy-Schwarz inequality,
while a broken arrow denotes an application of Identity (\ref{vkeqgentele}).}
\label{vkdiagram3d}
\end{figure}

\bibliographystyle{amsplain}

\end{document}